\documentclass[11pt,a4paper]{amsart}
\usepackage[margin=29mm,headheight=14pt]{geometry}

\usepackage[T1]{fontenc}
\usepackage{lmodern}
\usepackage{amsmath,amssymb,amsthm,mathtools}
\usepackage{microtype}
\usepackage{enumitem}
\usepackage{booktabs,array}
\usepackage[dvipsnames]{xcolor}

\usepackage{mycom,myarrows}

\makeatletter
\def\l@subsection{\@tocline{2}{0pt}{2pc}{5pc}{}}
\makeatother

\usepackage[
  style=alphabetic, sorting=nyt, backend=biber, doi=true, isbn=false
]{biblatex}

\usepackage[unicode, colorlinks, linktocpage=true]{hyperref}
\hypersetup{
	linkcolor = blue,
	citecolor = Red,
	urlcolor = PineGreen,
	bookmarksnumbered = true
}

\numberwithin{equation}{section}
\newtheorem{theorem}{Theorem}[section]
\newtheorem{proposition}[theorem]{Proposition}
\newtheorem{lemma}[theorem]{Lemma}

\newtheorem{maintheorem}{Theorem}

\theoremstyle{definition}
\newtheorem{definition}[theorem]{Definition}

\theoremstyle{remark}
\newtheorem{remark}[theorem]{Remark}

\renewcommand{\Norm}{\mathrm N}
\setlist[enumerate]{label=\textup{(\roman*)},leftmargin=*,itemsep=3pt}
\allowdisplaybreaks[1]
\title[Nonvanishing of special $L$-values of cusp forms on $\GL(2)$]{Nonvanishing of special $L$-values of cusp forms on $\GL(2)$ with totally split prime power twists: without the class number hypothesis}
\usepackage{orcidlink}
\author{Ruichen Xu \,\orcidlink{0009-0003-4555-2116}}
\address[Ruichen Xu]{Faculty of Mathematics, University of Vienna. Oskar-Morgenstern-Platz 1, 1090 Vienna, Austria.}
\email{ruichen.xu@univie.ac.at}
\date{\today}
\begin{document}

\begin{abstract}
Let $F$ be a number field and $f$ a normalized cuspidal Hecke eigenform on $\GL(2)$ over $F$, with $k/2$ as the central critical point of its $L$-function $L(f,s)$. Let $\frp$ be a totally split prime ideal of $F$ lying above an odd prime $p$ and coprime to the level ideal of $f$. In this article, we show that $L(f\otimes\phi, k/2)$ does not vanish for sufficiently ramified Hecke characters of $F$ of $p$-power order and $\frp$-power conductor. Our result improves previous works of Jaesung Kwon and Hae-Sang Sun, in which the additional hypothesis that $p$ does not divide the class number of $F$ is required.
\end{abstract}

\maketitle

\tableofcontents
\section{Introduction}
\label{sec:intro}

Let $F$ be a number field and let $p$ be an odd prime number. The main object of this article is a normalized cuspidal Hecke eigenform $f$ on $\GL(2)$ over $F$ of weight $(\bfk, \bfm)$, archimedean type $J$, level $\frN$ and central character $\chi$ (see Definition \ref{def:cusp-form} for precise definitions, following \cite{Hida}). Let the point $s = \frac{k}{2} \in \frac{1}{2}\ZZ_{>0}$ be the central critical point for $f$ under the arithmetic normalization, determined by the weight $(\bfk, \bfm)$ of $f$. 

Let $\frp$ be a prime ideal of $F$ lying above $p$, and let $\phi$ be a Hecke character over $F$ of $p$-power order with $\frp$-power conductor. We denote the set of all such Hecke characters by $\Xi_{\frp}$. Let $L(f \otimes \phi, s)$ be the $L$-function attached to $f$ and $\phi$. The main result of this article is the following eventual nonvanishing property of these $L$-functions.

\begin{maintheorem}[{Theorem \ref{thm:nonvanishing}: Eventual nonvanishing theorem}] \label{main:A}
Assume that $p$ is an odd prime number such that $\frp \mid p$ is a totally split prime ideal \footnote{Here, by saying that $\frp \mid p$ is a totally split prime ideal of $F$, we mean both the ramification degree $e(\frp\mid p)$ and the inertia degree $f(\frp\mid p)$ are one. Let $\frd_{F}$ be the different ideal of $F$. This automatically implies that $\frp \nmid \frd_{F}$. The assumption $\frp \nmid \frd_{F}$ in \cite{KS} is therefore vacuous.} of $F$ such that $\frp$ is coprime to $\frN$. Then 
\[
 L\!\left(\frac{k}{2},f\otimes\phi\right)\ne0
\]
for every $\phi \in \Xi_{\frp}$ of sufficiently large conductor.
\end{maintheorem}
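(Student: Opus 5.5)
The plan follows the Rohrlich / Kwon--Sun strategy: express the twisted central values as Fourier coefficients of modular symbols, and prove nonvanishing by an averaging-plus-equidistribution argument over characters of fixed conductor. The class number hypothesis was only used to move all characters onto a single connected component; here I would keep track of the full narrow class group instead.

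First, for $\phi\in\Xi_{\frp}$ of conductor $\frp^n$, I would write $L(\tfrac k2,f\otimes\phi)$ via the Birch-type formula, up to an explicit nonzero constant (Gauss sum and archimedean period). The result is a finite sum
\[
\sum_{a}\overline{\phi}(a)\,\Lambda_f\!\left(\tfrac{a}{\varpi^n}\right),
\]
where $a$ runs over $(\mathcal O/\frp^n)^\times$ and $\Lambda_f$ is the modular symbol / period integral of $f$ along the corresponding unipotent translate. Without the class number hypothesis, the adelic double coset decomposes into $h$ components indexed by the narrow class group $\mathrm{Cl}^+_F$. So $\phi$ also contributes a value $\phi(\fra_i)$ on ideal representatives $\fra_i$, and the sum becomes
\[
\sum_{i}\phi(\fra_i)\sum_a \overline\phi(a)\,\Lambda_{f,i}\!\left(\tfrac{a}{\varpi^n}\right).
\]
Since $\frp$ is totally split, $\mathcal O/\frp^n\cong \ZZ/p^n$. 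The characters of $p$-power order and $\frp$-power conductor then correspond to characters of the $p$-part of a ray class group, which is an extension of $(\mathrm{Cl}^+_F)_p$ by a quotient of $1+p\ZZ_p$ (modulo the image of global units).

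Second, I would fix a Galois orbit $\{\phi^\sigma\}$, which is enough because nonvanishing is Galois-invariant by algebraicity of the normalized values after choosing periods. I would then compute the trace $\sum_\sigma \sigma\big(L^{\mathrm{alg}}(\tfrac k2,f\otimes\phi)\big)$. Orthogonality of characters of the cyclic group $(1+p\ZZ_p)/(1+p^n\ZZ_p)$ collapses the sum to those $a$ in a fixed coset of $1+p^{n-1}\ZZ_p$, i.e.\ to a sum over $a=a_0(1+p^{n-1}t)$. The class group part does not disappear; it twists the sum by roots of unity $\phi(\fra_i)$ of bounded order, which is at most $p^{v_p(h)}$. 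To handle this, I would take the trace only over the subgroup of $\mathrm{Gal}$ fixing $\QQ(\mu_{p^{v}})$, with $v=v_p(h)$. This keeps $\phi(\fra_i)$ fixed, so the result is a fixed finite linear combination, with $\phi$-dependent coefficients in a finite set, of ``short'' sums $\sum_{t\bmod p}\Lambda_{f,i}\big(\tfrac{a_0(1+p^{n-1}t)}{\varpi^n}\big)$.

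Third, I would show these short sums converge as $n\to\infty$. The points $\tfrac{a_0}{\varpi^n}+\tfrac{a_0t}{p\varpi}$ form an orbit of a unipotent translate of a horocycle. By $p$-adic continuity of the modular symbol (the $U_\frp$-eigenvector relation, or more robustly equidistribution of the Hecke-translated cycles via the action of $\begin{psmallmatrix}\varpi&0\\0&1\end{psmallmatrix}$), the short sum tends to a multiple of the $\frp$-adic period, i.e.\ to $(1-\alpha_\frp^{-1}\cdots)$ times the integral over a full cycle. This is the step I expect to be the main obstacle. One must show that the limiting linear combination over $i$, weighted by the $\phi(\fra_i)$, cannot vanish for every one of the finitely many patterns of class-group values. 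I would try to pass to the intermediate sum over the $p$-part of $\mathrm{Cl}^+_F$ and apply orthogonality once more: average over characters that agree on the ray part but vary on the class group part. This isolates the single component $i$ where the limiting period $\Lambda_{f,i}(\infty)$, i.e.\ the untwisted central value (or a nonzero linear combination of $L(\tfrac k2,f\otimes\eta)$ over class group characters $\eta$), is nonzero. If that value is zero, I would instead choose a residue $a_0$ that makes the first-order term nonzero, using a nonvanishing Fourier coefficient of $f$ at a prime in the appropriate class; strong approximation lets us choose such a prime split appropriately.

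Finally, once the trace is bounded away from $0$ $p$-adically (or archimedeanly) for all large $n$, some conjugate, and hence every conjugate, of $L(\tfrac k2,f\otimes\phi)$ is nonzero. This gives Theorem~\ref{main:A}.
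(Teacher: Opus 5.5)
\textbf{There is a genuine gap, in two places.}

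\textbf{First gap: the class-group twists do not have bounded order.} You claim that the values $\phi(\fra_i)$ are roots of unity of order at most $p^{v_p(h)}$. This is false, because $\phi$ is a character of the ray class group, not of $\Cl(F)$. Take $\fra_i\neq\calO_F$ and write $\fra_i^{h}=(\beta_i)$. Then $\beta_i$ is not a unit, so it is not a root of unity, and hence $\beta_i\notin\mu_{p-1}\subset\ZZ_p^\times$. By \eqref{eq:principal-relation}, $\phi(\fra_i)^{h}=\phi_{\frp}(\beta_i)^{-1}$. By Lemma~\ref{lem:faithful}, this has order $p^{\,n-1-j_i}$, where $j_i$ depends only on $\beta_i$. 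So $\phi(\fra_i)$ has order growing like $p^{n}$.

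Consequently, restricting the trace to $\Gal(\QQ(\phi)/\QQ(\mu_{p^{v}}))$ does not fix $\phi(\fra_i)$. (Separately, $\sigma$ must also fix $\QQ_f(\mu_p)\cap\QQ(\mu_{p^\infty})$ so that $f^\sigma=f$.) What the trace actually does in the $i$-th component is restrict $a$ to the set where $\phi(\fra_i)\overline{\phi}_{\frp}(a)$ lies in a fixed small group of roots of unity. That set is a coset whose position moves with $\phi$. This translated support is exactly what the paper must control, in Propositions~\ref{prop:power-support} and~\ref{prop:root-average}. It also means you do not obtain a fixed finite combination of short sums with coefficients from a finite set.

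\textbf{Second gap, which is decisive: Step~3 does not hold.} This step is where the whole content of the theorem lies.

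\begin{enumerate}
\item A bounded number of points ($p$, or $(p-1)p^{c}$) cannot equidistribute.
\item The short sums have no limit. Already for $F=\QQ$ and $k=2$, the Hecke relation gives $\sum_{t\bmod p}\Lambda(a_0/p^n+t/p)=a_p\Lambda(a_0/p^{n-1})-\Lambda(a_0/p^{n-2})$. So you have only traded level $n$ for levels $n-1$ and $n-2$.
\item Convergence can in fact fail outright. Take an elliptic curve over $\QQ$ with $a_p=0$. If these sums converged to a constant uniformly in $a_0$, the period-normalized symbols $\Lambda(b/p^m)$, which lie in a fixed lattice, would be eventually constant in $b$. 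Then $\sum_b\overline{\chi}(b)\Lambda(b/p^m)=0$ for every nontrivial $\chi$ modulo $p^m$ with $m$ large, i.e.\ all these twisted $L$-values would vanish. That is the negation of the theorem.
\item Even granting a limit, the one you describe is a combination of untwisted values $L(k/2,f\otimes\eta)$. These can all vanish, for example when $F=\QQ$ and $f$ has root number $-1$.
\item Your fallback (choosing $a_0$ via a nonvanishing Fourier coefficient and strong approximation) is not an argument.
\end{enumerate}

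\textbf{What is missing} is a normalization of the Galois average that produces a main term which is provably nonzero. The paper gets this by weighting the average with the partial Gauss sums $\calP_\sigma(\phi)\calG_0(\phi^\sigma)$. After the approximate functional equation \eqref{eq:corrected-afe} and Lemma~\ref{lem:partial-gauss-identity}, the Dirichlet-series side becomes $\sum_{\fra}a_f(\fra)\phi_{\mathrm{av}}(\fra)(\Norm\fra)^{-k/2}V_{1,k/2}(\Norm\fra/y)$. Its $\fra=\calO_F$ term is $a_f(\calO_F)=1$. Every other ideal in the support has norm $\gg q^{1/(e(p-1))}$, and the support has counting function $\ll 1+X/q$ (Propositions~\ref{prop:norm-gap} and~\ref{prop:support-count}). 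The Fricke side is $O\bigl(q^{-1/2}(q^2/y)^{1-s_0}\bigr)$ by Proposition~\ref{prop:root-average}. Hence $L_{\mathrm{av}}(f,\phi)\to 1$ whether or not $L(k/2,f)$ vanishes, and Hida's Galois equivariance then finishes the proof.
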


The method for proving such eventual nonvanishing theorems dates back to \cite{Rohrlich84, Rohrlich84-2}, and was adapted to our setting in \cite{KS,KwonCorrected}: one first considers the Galois-averaged $L$-function (see Definition \ref{def:averaged-L-function}) for $n > 2c$, where $c$ is the constant defined as $\QQ_{f}(\mu_{p}) \cap \QQ(\mu_{p^{\infty}}) = \QQ(\mu_{p^{c}})$:
\[
L_{\mathrm{av}}(f, \phi) := \frac{1}{\abs{G_\phi}q} \sum_{\substack{a\bmod q\\a\equiv c_\phi \bmod{p^c}}} \rme_q(a) \sum_{\sigma\in G_\phi} \overline{\phi}^{\,\sigma}(a)\calG_{0}(\phi^\sigma) L\!\left(\frac{k}{2},f\otimes\phi^\sigma\right)
\]
for $\phi \in \Xi_{\frp}$ of conductor $\frp^{n}$, where $G_{\phi} := \Gal(\QQ(\phi)/(\QQ_{f}(\mu_p)\cap \QQ(\mu_{p^{\infty}})))$, with $\QQ(\phi)/\QQ$ the number field containing all values of $\phi$, and $\QQ_{f}$ is the Hecke field of $f$ over $F$. Here $q = p^n$ and $\rme_{q}(a) := \exp(2 \pi \rmi a/q)$, while $\calG_{0}(\phi)$ is the Gauss sum and $c_{\phi}$ is a residue class modulo $p^{c}$ to be specified in the text. Hida's algebraicity theorem \cite[Theorem~8.1]{Hida} states that 
\[
 L(k/2,f\otimes\phi)=0
 \quad\Longleftrightarrow\quad
 L(k/2,f^\sigma\otimes\phi^\sigma)=0
\]
for any $\sigma \in \Aut_{\QQ}(\CC)$. Consequently, the eventual nonvanishing theorem, i.e. Theorem \ref{main:A}, reduces to the following estimate:
\[
\sup_{\phi\in\Xi_{\frp}, \, \cond(\phi)=\frp^n}
\abs{L_{\mathrm{av}}(f,\phi)-1}
\longrightarrow 0.
\]
This follows from the following main estimate.

\begin{maintheorem}[{Theorem \ref{thm:average}: The main estimate}] \label{main:B}
Assume the hypotheses of Theorem \ref{main:A}. 
Let $\phi\in\Xi_{\frp}$ have conductor $\frp^n$, where $n>2c$. For $\varepsilon>0$ with $t=\frac{7}{64}+\varepsilon<1/4$, put $s_0=1/2-t$ and let $q=p^n$. For $q<y<q^2$, we have the following estimate:
\begin{equation*}\label{eq:main-average-intro}
\begin{aligned}
L_{\mathrm{av}}(f,\phi)
&=\frac{V_{1,k/2}(1/y)}{\Gamma(k/2,f)}\\
&\quad+O_{F,f,p,\varepsilon}\!\left(q^{-\frac{1}{e(p-1)}s_0}+q^{-1}y^{1-s_0}
+q^{-1/2}\left(\frac{q^2}{y}\right)^{1-s_0}\right).
\end{aligned}
\end{equation*}
In particular, as $n \rightarrow \infty$, $L_{\mathrm{av}}(f,\phi)$ converges to $1$ uniformly for $\phi\in\Xi_{\frp}$. More precisely, for $n \to \infty$, we have 
\begin{equation} \label{eq:convergence-average}
\sup_{\phi\in\Xi_{\frp}, \, \cond(\phi)=\frp^n}
\abs{L_{\mathrm{av}}(f,\phi)-1}
\longrightarrow 0
\end{equation}
\end{maintheorem}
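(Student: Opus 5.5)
We follow the Rohrlich method as adapted in \cite{KS,KwonCorrected}, organised so that the class group of $F$ never enters. First, apply an approximate functional equation to each $L(k/2,f\otimes\phi^\sigma)$ at the cut-off $y$. This writes $\Gamma(k/2,f)L(k/2,f\otimes\phi^\sigma)$ as a sum of two parts. The first part is a sum over integral ideals $\fra$ of $a_f(\fra)\phi^\sigma(\fra)\Norm(\fra)^{-1/2}V_{1,k/2}(\Norm(\fra)/y)$. The second part is the dual sum with root number $\varepsilon(f\otimes\phi^\sigma)$, of length about $q^2/y$, with cutoff $V_{2}$; its root number is a multiple of $\calG_0(\phi^\sigma)^2/q$ times the values of $\chi$ and $\phi^\sigma$ at the level.

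Substitute this into $L_{\mathrm{av}}$ and interchange the sums. In the direct part, the factor $\overline\phi^{\,\sigma}(a)\calG_0(\phi^\sigma)\phi^\sigma(\fra)$, averaged over $\sigma\in G_\phi$ and over $a\equiv c_\phi\bmod p^c$ with weight $\rme_q(a)$, reduces to character sums in the local component $\phi_\frp$. The key point is where $\frp$ totally split is used: $\calO_F/\frp^n\cong\ZZ/p^n$, so $\phi_\frp$ restricted to $\calO_{F,\frp}^\times\cong\ZZ_p^\times$ is a Dirichlet-type character mod $q$. The Gauss sum then unfolds as $\sum_{b}\phi_\frp(b)\rme_q(b)$ against $\rme_q(a)\overline\phi(a)$.

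We handle the global part of $\phi(\fra)$ without choosing ideal-class representatives, which is where the class number hypothesis entered before. We write $\phi=\phi_\frp\cdot\phi^{(\frp)}$ and sum over $\sigma$ via orthogonality of the $p$-power-order characters in the Galois orbit. Because $n>2c$, the Galois average over $G_\phi$ forces the relevant conditions to hold modulo $p^{n}$, with $p^c$-ambiguity removed by the congruence $a\equiv c_\phi$. The Galois orbit average kills all $\fra$ except those with $\phi^\sigma(\fra)$ independent of $\sigma$ up to the prescribed class, and it forces $\fra=\calO_F$ in the main term. This gives $V_{1,k/2}(1/y)/\Gamma(k/2,f)$.

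The surviving $\fra\neq\calO_F$ have $\Norm\fra\equiv$ constrained residues modulo $p^{n-c}$. Two sources of error arise here.
\begin{enumerate}
\item The fixed-$\fra$ contributions are ideals whose norm lies in a thin progression. They are bounded using $a_f(\fra)\ll\Norm(\fra)^{t}$, the Kim--Sarnak-type bound with $t=7/64+\varepsilon$ over number fields (Blomer--Brumley). Summing over a progression mod $q$ up to $y$ gives $q^{-1}y^{1-s_0}$.
\item The contribution of units and of the torsion of the global characters, which cannot be diagonalised, gives the $q^{-s_0/(e(p-1))}$ term. Here $e$ measures $p$-power roots of unity in $F_\frp$; the smallest nontrivial norm in the progression is $\gg q^{1/(e(p-1))}$.
\end{enumerate}

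In the dual part, $\calG_0(\phi^\sigma)^2$ combines with $\overline\phi^{\,\sigma}(a)\calG_0(\phi^\sigma)$ into $\calG_0^3$-type sums. Their average is bounded by $q^{3/2}$ times a Kloosterman-type sum mod $q$, by explicit evaluation of Gauss sums for prime-power moduli (stationary phase). With the $1/q$ normalisation, each dual coefficient gets saving $q^{-1/2}$, and summing $\Norm(\fra)^{-1/2+t}$ up to $q^2/y$ gives $q^{-1/2}(q^2/y)^{1-s_0}$.

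Finally, choose $y=q^{3/2}$. Then every error term is a negative power of $q$, since $s_0>1/4$, and $V_{1,k/2}(1/y)\to\Gamma(k/2,f)$ as $y\to\infty$. This gives \eqref{eq:convergence-average} uniformly in $\phi$.

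The main obstacle is step (ii) together with the reduction of $\phi(\fra)$: showing, without a class number hypothesis, that the Galois average isolates $\fra=\calO_F$ up to an error controlled by local congruences. To do this, we first try to use the idelic decomposition and the fact that $\phi$ has $p$-power order, so that its restriction to the $p$-part of the class group is absorbed into $G_\phi$-orbits.
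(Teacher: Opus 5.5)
Your approximate functional equation and the final choice of $y$ follow the paper. However, the two steps that actually remove the class number hypothesis are missing, and what you put in their place does not work.

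\textbf{Ordinary part.} After the Galois average, the surviving ideals are exactly those with $\phi(\fra)\in\mu_{p^c}$ (Lemma \ref{lem:trace}). For these you need two things: a count $\ll 1+X/q$, and a gap $\Norm\fra\gg q^{1/(e(p-1))}$ for $\fra\ne\calO_F$.
\begin{itemize}
\item You claim such $\fra$ have $\Norm\fra$ in prescribed classes modulo $p^{n-c}$. This is false unless $F=\QQ$: $\phi(\fra)$ only sees the $\frp$-adic component, while the norm involves every prime above $p$. For example, in $\QQ(i)$ with $\frp=(2+i)$, the condition $\alpha\equiv1\pmod{\frp^n}$ says nothing about $\alpha\bar\alpha$ modulo $5^n$.
\item Also, $e$ is not a count of $p$-power roots of unity in $F_\frp=\QQ_p$; there are none for odd $p$. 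It is the exponent of $\Cl(F)$, and the term $q^{-s_0/(e(p-1))}$ comes from the norm gap, not from units.
\item Your last paragraph concedes this step is not carried out. ``Absorbing the $p$-part of the class group into $G_\phi$-orbits'' is not a mechanism: the Galois average has already done all it can by producing $\phi(\fra)\in\mu_{p^c}$.
\end{itemize}
The missing idea is to pass to a principal generator. Since $\mu_{p^c}$ is a group, $\phi(\fra)^e\in\mu_{p^c}$. Write $\fra^e=(\beta)$; then $\phi((\beta))=\phi_\frp(\beta)^{-1}$ and Lemma \ref{lem:faithful} give $\beta^{p-1}\equiv1\pmod{\frp^{n-c}}$. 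Taking $\beta$ in the fundamental domain $C_F^0$, so that $\abs{N(\beta^{p-1}-1)}\ll\abs{N(\beta)}^{p-1}$, the relation $0\ne\beta^{p-1}-1\in\frp^{n-c}$ forces $(\Norm\fra)^{e(p-1)}\gg q$.

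For the count one \emph{does} choose class representatives $\frc_i$; that was never the obstruction. Kwon's problem was the fibre size of $x\mapsto x^{h_F(p-1)}$.
\begin{itemize}
\item Writing $\fra=(\alpha)\frc_i^{-1}$ gives $\alpha^{e(p-1)}\equiv b_i^{p-1}\pmod{\frp^{n-c}}$.
\item The fibres of $x\mapsto x^{e(p-1)}$ on the cyclic group $(\ZZ/p^{n-c}\ZZ)^\times$ have size at most $(p-1)p^{v_p(e)}$, independently of $n$, even when $p\mid h_F$.
\item A lattice-point count in each resulting coset of $\frp^{n-c}\frc_i$ then gives $\ll 1+X/q$.
\end{itemize}

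\textbf{Fricke part.} The analogous gap appears here.
\begin{itemize}
\item The root number is built from $\calG(\bar\phi^\sigma)^2/q$. Using $\calG(\phi)\calG(\bar\phi)=q$, the product $\calG_0(\phi^\sigma)W(\phi^\sigma)$ collapses to a single Gauss sum, not a $\calG_0^3$-type sum. One gets $\calG^{\iota}_{\mathrm{av}}(\phi,\fra)=\sum_{z\in(\ZZ/q\ZZ)^\times}\overline\phi_{\mathrm{av}}(z\bfd_{\fra,n}\frd_F^{-2})\Kl_c(z;q)$.
\item The bound $\abs{\Kl_c(z;q)}\le p^{2-n/2}$ alone gives no saving, since there are $\asymp q$ values of $z$. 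The $q^{-1/2}$ saving requires showing that only $O(1)$ residues $z$ survive the Galois average.
\item When $p\mid h_F$ you cannot turn $\phi(\frN)$ or $\phi(\frd_F)$ into a value of $\phi_\frp$. You must keep $\xi=\overline\phi(\bfd_{\fra,n}\frd_F^{-2})$ as a multiplier. The support condition $\overline\phi_\frp(z)\in\xi^{-1}\mu_{p^c}$ is then a coset meeting the image in at most $p^c$ values, so at most $(p-1)p^c$ residues $z$ contribute.
\end{itemize}

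\textbf{Choice of $y$.} Taking $y=q^{3/2}$ makes $q^{-1}y^{1-s_0}$ decay only when $s_0>1/3$, not merely $s_0>1/4$. This is harmless because $\varepsilon$ may be taken small, but your justification as written is insufficient.
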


\subsection{Previous works}
As noted above, the method of estimating Galois-averaged $L$-functions first appeared in \cite{Rohrlich84,Rohrlich84-2}, where 
\begin{itemize}
    \item In the former work, Rohrlich showed the eventual nonvanishing property of $L$-functions $L(f \otimes \phi, 1)$ where $f$ is a weight two normalized cuspidal newform over $\QQ$ and $\phi$ varies over primitive Dirichlet characters with $p$-power conductor.
    \item In the second work, Rohrlich showed the eventual nonvanishing property of $L$-functions $L(\chi \otimes \phi, 1)$ where $\chi$ is a Hecke character of infinite type $(1,0)$ of root number $+1$ over an imaginary quadratic field $K$ such that $p$ does not divide the class number of $K$, and $\phi$ is a Hecke character over $K$ factoring through the anticyclotomic $\ZZ_p$-extension $K_{\infty}^{\ac}/K$. In a recent work \cite{Jia}, Haijun Jia removed the extra assumption that $p$ does not divide the class number of $K$.
\end{itemize}

In our setting, Jaesung Kwon and Hae-Sang Sun proved in \cite{KS,KwonCorrected} results corresponding to our Theorems \ref{main:A} and \ref{main:B}, under the additional assumption that \emph{$p$ does not divide the class number of $F$}.\footnote{This condition is presented in \cite[Section 4]{KwonCorrected} and hence in the main Theorem 4.1 there, though it does not appear in the introduction and Theorem A of \textit{loc.cit.}.} Inspired by \cite{Jia}, the main innovation of the present article is to remove this extra condition.

\subsection{Method of the proof}
Our proof retains the framework of Kwon \cite{KwonCorrected} but replaces the two steps in Kwon's argument that use the hypothesis $p\nmid h_F$, where $h_F$ is the class number of $F$. Following \cite{KwonCorrected}, the averaged $L$-value $L_{\mathrm{av}}(f, \phi)$  is decomposed into two parts
\[
L_{\mathrm{av}}(f,\phi) = S_{1}(\phi; y) + S_{2}(\phi; y),
\]
where we call the two summands the \emph{ordinary part} and the \emph{Fricke part}, respectively.

We now briefly describe our two modifications. The notation used in this overview is introduced fully in the subsequent sections.

(1) In the estimate of the ordinary part, the \emph{bounded fibre argument} in Section \ref{sec:bounded-fiber} replaces the paragraph immediately preceding \cite[Lemma 3.4]{KwonCorrected}, together with \cite[Lemma 3.4]{KwonCorrected} itself.

Kwon uses $p\nmid h_F$ to assert that $x \mapsto x^{h_F(p-1)}$ is a $(p-1)$-to-one surjection $\calO_{\frp}^{\times}\to 1+\frp\calO_{\frp}$. \cite[Lemma 3.4]{KwonCorrected} then shows that, in each fixed ideal class, the parameters representing ideals in the support of the averaged character $\phi_{\mathrm{av}}$ (see \eqref{eq:ordinary-character-average}) lie in a union of $p-1$ residue classes modulo $\frp^{n-c}$. This feeds into Kwon's \cite[Proposition 3.8]{KwonCorrected} and the estimate of the ordinary half in \cite[Proposition 3.10]{KwonCorrected}.

We instead let $e_F$ denote the exponent of the class group $\Cl(F)$. If an ideal $\fra$ belongs to the support of $\phi_{\mathrm{av}}$ and $\fra^{e_F}=(\beta)$, then we show that $\beta^{p-1}\equiv1\pmod{\frp^{n-c}}$, where $c$ depends only on the fixed coefficient field. After choosing a representative for each ideal class, this becomes a congruence of the form
\[
x^{e_F(p-1)}\equiv u\pmod{\frp^{n-c}}.
\]
Every nonempty fibre of this power map $x \mapsto x^{e_{F}(p-1)}$ has at most ${(p-1)p^{v_p(e_F)}}$ elements, and this is still independent of $n$. This recovers the bounded union of translated residue classes needed for the estimate in the ordinary part.

(2) In the estimate of the Fricke part, the \emph{translated local support argument} in Section \ref{sec:translated-local-support} replaces the proof of \cite[Proposition 3.6]{KwonCorrected}.

In the proof of \cite[Proposition 3.6]{KwonCorrected}, Kwon explicitly chooses $c_n \equiv h_F^{-1}\pmod{p^n}$, writes $\frN^{h_F}=(\beta)$ and uses $c_nh_F\equiv1\pmod{p^n}$ to replace $\phi(\frN)$ by the value $\phi(\beta^{c_n})$ on the principal element. This is impossible when $p\mid h_F$. Our argument retains the idele class involving the level ideal $\frN$ as a multiplier in the Fricke average. If the multiplier has value $\xi$ under $\phi$, the averaged character $\phi_{\mathrm{av}}$ can be nonzero only when
\[
\overline{\phi}_{\frp}(z)\in\xi^{-1}\mu_{p^c}.
\]
The multiplier therefore translates the target of the local character condition without changing its cardinality. Since the target contains at most $p^c$ character values and every such value has $p-1$ preimages, at most $(p-1)p^c$ residue classes $z\bmod p^n$ contribute.

With Kwon's Kloosterman sum estimate in \cite[Lemma 3.5]{KwonCorrected} retained, the resulting analogue of \cite[Proposition 3.6]{KwonCorrected} is then used for the Fricke part estimate, corresponding to \cite[Proposition 3.11]{KwonCorrected}.

We have also reorganized parts of the presentation in \cite{KS,KwonCorrected} to reflect our understanding of the estimates, and we have supplied proofs of several intermediate details that are left implicit there. To make the present article reasonably self-contained, we give the complete argument rather than merely citing the corresponding results of \cite{KS,KwonCorrected}.

\subsection{Arithmetic applications}
In the former work \cite{Rohrlich84}, combined with the Iwasawa theory for  elliptic curves with complex multiplication (\textit{viz.} CM elliptic curves), it is shown in \cite[Corollary on page 409]{Rohrlich84} that for any CM elliptic curve $E/\QQ$ with CM given by a ring of integer of an imaginary quadratic field, the Mordell--Weil group $E(\QQ_{\infty})$ is a finitely generated abelian group, where $\QQ_{\infty}/\QQ$ is the cyclotomic $\ZZ_p$-extension of $\QQ$.

Based on \cite{Rohrlich84-2} and its recent generalization \cite{Jia}, it is shown in \cite{Greenberg, LX} that for elliptic curves $E/\QQ$ with CM by an imaginary quadratic field $K$, the Mordell--Weil group $E(K_{\infty}^{\ac})$ is a finitely generated abelian group if and only if $p$ splits in $K$ and the root number of $E$ is $+1$. Based on \cite{Rohrlich84-2, Jia}, the work \cite{LX} also manages to describe the growth pattern of $\rank_{\ZZ} E(K_{n}^{\ac})$ as $n$ grows, under all reduction types of $p$ in $K$.

Inspired by these arithmetic applications, our main result has the following application. Let $K$ be any imaginary quadratic field such that $p$ splits in $K$ as $p\calO_{K} = \frp\overline{\frp}$. By the main theorem of \cite{QiStokes}, there exists a unique $\frp$-ramified $\ZZ_p$-extension $K_{\infty}^{(\frp)}/K$. Let $E/\QQ$ be an elliptic curve that does not have CM by $K$. In a forthcoming work of the author with Peikai Qi, we establish the Iwasawa theory for such elliptic curves with good ordinary reduction at $p$ along this particular $\ZZ_p$-extension $K_{\infty}^{(\frp)}/K$. With the eventual nonvanishing theorem (Theorem \ref{main:A}) of this article as the most crucial analytic input, we are able to prove that $E(K_{\infty}^{(\frp)})$ is a finitely generated abelian group \footnote{We may impose additional hypotheses on the elliptic curve $E/\QQ$.}. This gives evidence for Mazur's growth conjecture which was first announced in his ICM 1983 report \cite{Mazur}; see also \cite[page 1878]{Lei} for a recent treatment.

We record the corollary of Theorem \ref{main:A} in this setting as follows.
\begin{maintheorem}[{Theorem \ref{thm:elliptic}}]
Let $E/\QQ$ be an elliptic curve of conductor $N$. Suppose $p \nmid N$, $p$ splits in $K$ and $E$ does not have CM by $K$. Then for every finite-order character $\phi$ of $\Gamma_{\frp}$ of sufficiently large conductor, we have $L(E/K,\phi,1) \ne 0$.
\end{maintheorem}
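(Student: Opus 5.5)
We deduce this statement from Theorem \ref{main:A} applied with base field $K$ and with $f$ the base change to $K$ of the weight two newform attached to $E$. By modularity, $E/\QQ$ corresponds to a normalized weight two newform $g$ of level $N$ with trivial character. We take $f$ to be its base change $\mathrm{BC}_{K/\QQ}(g)$, a Hecke eigenform on $\GL(2)$ over $K$. Then $L(E/K,s)=L(g,s)L(g\otimes\varepsilon_K,s)=L(f,s)$, where $\varepsilon_K$ is the quadratic character of $K/\QQ$. More generally, for any finite-order Hecke character $\phi$ of $K$ we have $L(E/K,\phi,s)=L(f\otimes\phi,s)$, with central point $s=1$, which is $k/2$ in the arithmetic normalization of weight two.

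The first point to check is that $f$ is \emph{cuspidal}. Base change of $g$ fails to be cuspidal exactly when $g\otimes\varepsilon_K\cong g$, that is, when $g$ has CM by $K$. Since $E$ does not have CM by $K$, $f$ is a cuspidal Hecke eigenform. Its level $\frN$ divides $N\calO_K$ up to primes dividing the discriminant of $K$. The second point is that $\frp$ is coprime to $\frN$: since $p\nmid N$, $g$ is unramified at $p$, so $f$ is unramified at both primes of $K$ above $p$. Moreover, $p$ is odd, as Theorem \ref{main:A} requires and as the paper assumes throughout. Since $p$ splits in $K$, $\frp$ has $e=f=1$, so $\frp$ is totally split in the sense of Theorem \ref{main:A}.

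Next we identify characters of $\Gamma_{\frp}=\Gal(K_\infty^{(\frp)}/K)$ with elements of $\Xi_{\frp}$. By class field theory, a finite-order character of $\Gamma_{\frp}\cong\ZZ_p$ is a Hecke character of $K$ of $p$-power order. Because $K_\infty^{(\frp)}/K$ is unramified outside $\frp$, its conductor is a power of $\frp$, so $\phi\in\Xi_{\frp}$. Its conductor tends to infinity along the tower: only finitely many characters of $\Gamma_\frp$ have bounded conductor, since their kernels contain the image of $1+\frp^m\calO_\frp$, which has finite index. Theorem \ref{main:A} now gives $L(1,f\otimes\phi)\neq0$ for all such $\phi$ of sufficiently large conductor, which is the claim. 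Note that Theorem \ref{main:A} holds with no hypothesis on $p\mid h_K$, which matters here, since $K_\infty^{(\frp)}$ may interact with the class group.

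The main obstacle is the cuspidality and normalization step. We must identify the base-changed form within the framework of Definition \ref{def:cusp-form}: parallel weight $2$, trivial central character, and the correct archimedean type for an imaginary quadratic field. We must also confirm that $k/2$ corresponds to $s=1$ for $L(E/K,\phi,s)$ under the paper's normalization. If the normalizations differ, we would compare Euler factors prime by prime, using that $\phi$ is unramified outside $\frp$ and that $f$ is unramified at $\frp$, so the two $L$-functions agree.
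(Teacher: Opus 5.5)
Your proposal is correct and follows essentially the same route as the paper: base change to $K$, cuspidality from the non-CM hypothesis, unramifiedness at $\frp$ from $p\nmid N$, and the observation that characters of $\Gamma_{\frp}$ lie in $\Xi_{\frp}$, after which Theorem \ref{main:A} applies directly. The only minor difference is how unbounded conductors are obtained: you use a finite-index argument (only finitely many characters factor through the finite ray class group modulo $\frp^m$), whereas the paper computes the exact conductor $\frp^{r-b+1}$ in Section \ref{sec:conductor-frp-Zp}; either suffices, and this point only serves to make the statement non-vacuous.
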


\subsection*{Acknowledgements}
We thank Haijun Jia for helpful conversations during 2022-2024 concerning Rohrlich's work \cite{Rohrlich84-2} and his own related research \cite{Jia}. We also thank our advisor, Professor Xin Wan, for drawing our attention to Rohrlich's work and for suggesting that we consider the problem of removing the hypothesis \(p\nmid h_F\) in the setting of \cite{Rohrlich84-2}.

This work was carried out during the author's postdoctoral appointment at the University of Vienna under the supervision of Professor Harald Grobner and the co-supervision of Professor Adel Betina, to whom we would like to express our sincere gratitude for their support and guidance. This work (together with the author's postdoctoral position) is supported by the \emph{FWF Principal Investigator Project (PAT-2584625): Eigenvarieties and $p$-adic $L$-functions}.
\section{Preliminaries}
\label{sec:ray}

\subsection{Ray class groups}

For $n\geq1$, let $I(\frp)$ be the group of fractional ideals prime to $\frp$, and let $P_1(\frp^n)$ be the subgroup generated by the principal ideals $(\alpha)$ with $v_{\frp}(\alpha)=0$ and $\alpha\equiv1\pmod{\frp^n}$. Let $\Cl(F,\frp^n)=I(\frp)/P_1(\frp^n)$ be the ray class group of $F$ of modulus $\frp^{n}$. It fits into the following exact sequence:
\begin{equation}\label{eq:finite-ray}
 \calO_F^\times\longrightarrow(\calO_F/\frp^n)^\times\longrightarrow\Cl(F,\frp^n)\longrightarrow\Cl(F)\longrightarrow1.
\end{equation}
Taking inverse limits in \eqref{eq:finite-ray} with respect to $n$, the surjectivity of transition maps therefore gives
\begin{equation}\label{eq:infinite-ray}
 1\longrightarrow\overline{\calO_F^\times}\longrightarrow\calO_{F,\frp}^\times\longrightarrow \Cl(F, \frp^{\infty}) \longrightarrow\Cl(F)\longrightarrow1,
\end{equation}
where $\Cl(F, \frp^{\infty})=\varprojlim_n\Cl(F,\frp^n)$ and the bar denotes the closure of the image of the global units.

We denote by $\Xi_{\frp}$ the set of $p$-power order characters of $\Cl(F, \frp^{\infty})$ of $\frp$-power conductor. We assume that the residue degree and the ramification degree of $\frp$ over $p$ are $1$. In this case, $\rank_{\ZZ_p} \Cl(F, \frp^{\infty}) \leq 1$. We only consider the case when $\rank_{\ZZ_p} \Cl(F, \frp^{\infty}) = 1$. If the rank is zero, then $\Xi_{\frp}$ is a finite set, and hence our main theorem is vacuously true.

\subsection{Idelic Hecke characters}

Every character in $\Xi_{\frp}$ can be regarded as a finite-order Hecke character
\[
 \phi:F^\times\backslash\AA_F^\times\longrightarrow\CC^\times
\]
that is unramified away from $\frp$ and trivial at infinity. If $\fra$ is prime to $\frp$, let $\bfa$ be a finite idele with the valuations of $\fra$ away from $\frp$ and with $\frp$-component one, and set $\phi(\fra)=\phi(\bfa)$. If $\iota_{\frp}(u)$ is the idele whose $\frp$-component is $u$ and whose other components are one, put $\phi_{\frp}(u)=\phi(\iota_{\frp}(u))$. We gather the following observations.

\begin{itemize}
    \item Triviality on the diagonal copy of $F^\times$ inside $\AA_{F}^{\times}$ gives
\begin{equation}\label{eq:principal-relation}
 \phi((\alpha))=\phi_{\frp}(\alpha)^{-1} \quad
  \text{ for } v_{\frp}(\alpha)=0, \text{ i.e. } ((\alpha), \frp) = 1.
\end{equation}
    \item Taking $\alpha=u\in\calO_F^\times$ in \eqref{eq:principal-relation}, and using $(u)=\calO_F$ and $\phi(\calO_F)=1$, gives
\begin{equation}\label{eq:global-unit-kernel}
 \phi_{\frp}(u)=1  \quad \text{ for }u\in\calO_F^\times.
\end{equation}
\end{itemize}

Whenever a local residue $a\in(\ZZ/p^n\ZZ)^\times$ occurs inside $\phi$, we mean $\phi_{\frp}(\widetilde a)$ for any lift $\widetilde a\in\ZZ_p^\times$. The value is well defined for a character of conductor $\frp^n$.

We fix $F_{\frp}=\QQ_p$ and $\calO_{F,\frp}=\ZZ_p$. For $j\geq1$, put $U_j=1+p^j\ZZ_p$. Since $p$ is an odd prime, there is a canonical product decomposition
\begin{equation}\label{eq:local-units}
 \ZZ_p^\times=\mu_{p-1}\times U_1,
\end{equation}
and $U_1/U_n$ is cyclic of order $p^{n-1}$.

\begin{lemma}\label{lem:faithful}
Let $\phi$ have $p$-power order and exact conductor $\frp^n$, with $n\geq2$. Then $\phi_{\frp}$ is trivial on $\mu_{p-1}$, and its restriction to $U_1/U_n$ is an isomorphism onto $\mu_{p^{n-1}}$. For every integer $c$ with $0\leq c<n-1$ and every $x\in\ZZ_p^\times$,
\begin{equation}\label{eq:local-order-cutoff}
 \phi_{\frp}(x)^{p^c}=1 \quad\Longleftrightarrow\quad x^{p-1}\equiv1\pmod{p^{n-c}}.
\end{equation}
In particular, if a character of exact conductor $\frp^n$ exists, then for $u\in\calO_F^\times$, we have
\begin{equation}\label{eq:unit-congruence}
 u^{p-1}\equiv1\pmod{\frp^n}.
\end{equation}
\end{lemma}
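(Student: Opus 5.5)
The plan is to work entirely with the local component $\phi_{\frp}:\ZZ_p^\times\to\CC^\times$ and use the decomposition \eqref{eq:local-units}. Since $\phi$ has exact conductor $\frp^n$, $\phi_{\frp}$ is trivial on $U_n$ and nontrivial on $U_{n-1}$. Since $\phi$ has $p$-power order, so does $\phi_{\frp}$. Its restriction to $\mu_{p-1}$ has order dividing both a power of $p$ and $p-1$, hence is trivial. This settles the first claim.

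For the isomorphism, $\phi_{\frp}$ factors through $U_1/U_n$, which is cyclic of order $p^{n-1}$ with generator $1+p$. The image of a cyclic $p$-group under a character is $\mu_{p^m}$, where $p^m$ is the order of the image of the generator. Nontriviality on $U_{n-1}$ matters here because $U_{n-1}/U_n$ is the unique subgroup of order $p$ in the cyclic group $U_1/U_n$. If the kernel of $\phi_{\frp}|_{U_1/U_n}$ were nontrivial, it would contain this subgroup, contradicting exact conductor $n$. So the restriction is injective, and comparing cardinalities shows it is an isomorphism onto $\mu_{p^{n-1}}$. I am using $n\ge2$ here so that $U_{n-1}\subseteq U_1$ makes sense in this argument.

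For \eqref{eq:local-order-cutoff}, write $x=\omega\cdot x_1$ with $\omega\in\mu_{p-1}$ and $x_1\in U_1$. Then $\phi_{\frp}(x)=\phi_{\frp}(x_1)$ and $x^{p-1}=x_1^{p-1}$. Because $p-1$ is prime to $p$, raising to the power $p-1$ is an automorphism of $U_1$ preserving each $U_j$. Hence $x^{p-1}\in U_{n-c}$ if and only if $x_1\in U_{n-c}$. Under the isomorphism $U_1/U_n\cong\mu_{p^{n-1}}$, the elements killed by $p^c$ form the unique subgroup of order $p^c$, namely $U_{n-c}/U_n$. The hypothesis $c<n-1$ guarantees $n-c\ge2$, so $U_{n-c}\subseteq U_1$ and its index in $U_1$ is $p^{n-c-1}$, as required. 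So $\phi_{\frp}(x)^{p^c}=1$ if and only if $x_1\in U_{n-c}$ if and only if $x^{p-1}\equiv1\pmod{p^{n-c}}$.

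Finally, \eqref{eq:unit-congruence} is the case $c=0$. By \eqref{eq:global-unit-kernel}, $\phi_{\frp}(u)=1$ for $u\in\calO_F^\times$, so $u^{p-1}\equiv1\pmod{p^n}$ in $\ZZ_p$. Under the identification $F_{\frp}=\QQ_p$ with $\frp$ unramified, this is the congruence modulo $\frp^n$. $c=0$ is allowed since $n\ge2$.

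No step is a real obstacle. The only care needed is the index bookkeeping showing that the $p^c$-torsion of $U_1/U_n$ is exactly $U_{n-c}/U_n$, which rests on cyclicity, and keeping $c<n-1$ so all the $U_j$ involved lie inside $U_1$.
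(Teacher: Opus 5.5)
Your proposal is correct and follows the paper's proof essentially step by step. Both arguments get triviality on $\mu_{p-1}$ from coprimality, injectivity on $U_1/U_n$ from the unique order-$p$ subgroup $U_{n-1}/U_n$, reduce \eqref{eq:local-order-cutoff} to the $U_1$-component using that $p-1$ is a $p$-adic unit, and take $c=0$ together with \eqref{eq:global-unit-kernel} for the unit congruence. The only cosmetic difference is in identifying the $p^c$-torsion of $U_1/U_n$ as $U_{n-c}/U_n$: you use uniqueness of subgroups of a given order in a cyclic group plus an index count, whereas the paper verifies $\langle x\rangle^{p^c}\in U_n \Leftrightarrow \langle x\rangle\in U_{n-c}$ via $\log_p$.
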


\begin{proof}
The tame factor $\mu_{p-1}$ has order prime to $p$, so a character of $p$-power order is trivial on it. If the restriction to the cyclic group $U_1/U_n$ were not injective, its kernel would then be a nontrivial subgroup of the cyclic $p$-group $U_{1}/U_{n}$, and hence would contain its unique subgroup $U_{n-1}/U_n$ of order $p$. The character would then be trivial on $U_{n-1}$, contrary to the assumption that $\phi$ has exact conductor $\frp^{n}$. Thus the restriction of $\phi$ to $U_{1}/U_{n}$ is injective and has image $\mu_{p^{n-1}}$.

Write $x=\omega(x)\langle x\rangle$ according to \eqref{eq:local-units} where $\omega(x) \in \mu_{p-1}$ and $\lrangle{x} \in U_{1}$. Because $\phi_{\frp}$ is trivial on $\mu_{p-1}$, we have $\phi_{\frp}(x) = \phi_{\frp}(\lrangle{x})$. Since its restriction to $U_{1}/U_{n}$ is injective, we see that 
\begin{equation} \label{eq:faithful-1}
\phi_{\frp}(x)^{p^{c}} = \phi_{\frp}(\lrangle{x})^{p^{c}} = 1 \Longleftrightarrow \lrangle{x}^{p^{c}} \in U_{n}.
\end{equation}
Note the following equivalences: 
\begin{multline} \label{eq:faithful-2}
\lrangle{x}^{p^{c}} \in U_{n} \Longleftrightarrow \log_{p}\lrangle{x} \in p^{n-c} \ZZ_p \Longleftrightarrow \lrangle{x} \in U_{n-c} \\ \Longleftrightarrow \lrangle{x}^{p-1} \in U_{n-c} \Longleftrightarrow x^{p-1} \in 1 + p^{n-c} \ZZ_p,
\end{multline}
where the third equivalence uses the fact that $p-1 \in \ZZ_p^{\times}$, and hence after applying $\log_p:U_1\cong p\ZZ_p$, multiplication by $p-1$ preserves every subgroup $p^{j}\ZZ_p$; and the fourth equivalence uses $x^{p-1} = \lrangle{x}^{p-1}$. Combining equations \eqref{eq:faithful-1} and \eqref{eq:faithful-2} gives \eqref{eq:local-order-cutoff}. For the final assertion, equation \eqref{eq:global-unit-kernel} gives $\phi_{\frp}(u)=1$. Applying \eqref{eq:local-order-cutoff} with $c=0$ then gives the result.
\end{proof}

\subsection{The partial Gauss sum and the average Gauss sum}
We define $\bfe(z) := \exp(2 \pi \rmi z)$ for $z \in \CC$. Let $\bfe_{F}$ be the character of $F \bs \AA_{F}$ characterized by $\bfe_{F}(z)=\bfe(\Tr_{F/\QQ}(z))$ for $z \in F_{\RR} := F \otimes_{\QQ} \RR$.

Let $\QQ_f$ be the Hecke field of $f$ over $F$. We put $K_0=\QQ_f(\mu_p)$, and choose the unique integer $c\geq1$ such that
\begin{equation}\label{eq:cyclotomic-intersection-intro}
 K_0\cap\QQ(\mu_{p^\infty})=\QQ(\mu_{p^c}).
\end{equation}
If $\phi\in\Xi_{\frp}$ has conductor $\frp^n$ and $n > 2c$, its restriction to $1+p\ZZ_p$ has exact order $p^{n-1}$. The map $b\mapsto\phi_{\frp}(1+p^{n-c}b)$ is a primitive additive character of $\ZZ/p^c\ZZ$. Therefore, there is a unique $c_\phi\in(\ZZ/p^c\ZZ)^\times$ such that 
\footnote{For $M=n-c$, we have
\[
(1+p^M b)(1+p^M b^\prime)\equiv1+p^M(b+b^\prime)\pmod{p^n}
\]
when $2M\geq n$. Thus $n\geq2c$ makes $b\mapsto\phi_{\frp}(1+p^{n-c}b)$ additive on $\ZZ/p^c\ZZ$; its primitivity follows from the fact that $\phi$ has exact conductor $\frp^n$. The corresponding assertion in \cite[page 126]{KwonCorrected}, however, is stated under the weaker condition $n>c$ (note that the notation $n_{0}$ in \cite{KwonCorrected} is our $c$), which seems insufficient. Nevertheless, this correction does not affect the eventual
estimates, which concern sufficiently large $n$.}
\begin{equation}\label{eq:cphi-intro}
 \phi_{\frp}(1+p^{n-c}b)=\exp(2\pi\rmi b c_\phi/p^c)
 \qquad(b\in\ZZ_p).
\end{equation}

Let $G_\phi=\Gal(\QQ(\phi)/\QQ(\mu_{p^c}))$. Put $\frc=\frp^n$, $q=p^n$, and $\rme_q(a)=\exp(2\pi\rmi a/q)$. Let $\varpi_{\frc}$ be the finite idele with $\frp$-component $p^n$ and all other components $1$. As in \cite[Section 2.5]{KwonCorrected}, let $\calR_{\frc}$ be the image in $\AA_F$ of a complete set of representatives of $(\frc^{-1}/\calO_F)^\times$, embedded at $\frp$ and set equal to zero at the other finite places.

Since $\frp\nmid\frd_F$, choose a finite idele $d_F$ representing $\frd_F$ with $(d_F)_{\frp}=1$. Define the \emph{full Gauss sums}
\begin{equation}\label{eq:global-gauss}
 \calG(\phi):=\phi(d_F)^{-1} \sum_{u\in\calR_{\frc}} \phi_{\frp}(p^n u_{\frp})\bfe_F(u/d_F), \quad \calG_0(\phi):= \sum_{a\in(\ZZ/q\ZZ)^\times} \phi_{\frp}(a)\rme_q(a).
\end{equation}
The map $u\mapsto p^n u_{\frp}$ identifies $\calR_{\frc}$ with $(\ZZ/q\ZZ)^\times$. Our global additive character convention gives $\bfe_{F,\frp}(a/p^n)=\rme_q(-a)$. Consequently,
\[
 \sum_{u\in\calR_{\frc}} \phi_{\frp}(p^n u_{\frp})\bfe_F(u/d_F) = \sum_{a\in(\ZZ/q\ZZ)^\times} \phi_{\frp}(a)\rme_q(-a) =\calG_0(\phi).
\]
The last equality follows by replacing $a$ with $-a$, since $\phi_{\frp}(-1)=1$. Moreover, $\phi(d_F)=\phi(\frd_F)$, because $(d_F)_{\frp}=1$ and $\phi$ is unramified away from $\frp$. We have therefore proved \footnote{In \cite{KwonCorrected}, the author identified $\calG(\phi)$ with $\calG_{0}(\phi)$ in his estimate, which may not hold since $\phi(d_{F})$ might not be trivial. In this article, we modify the definitions of the averaged Gauss sums using $\calG_{0}(\phi)$ instead. The Gauss sum $\calG(\phi)$ also plays an important role, since it is the Gauss sum used in the automorphic setting of \cite{Hida}.}
\begin{equation}\label{eq:gauss-bridge}
 \calG(\phi)=\phi(\frd_F)^{-1}\calG_0(\phi).
\end{equation}

In \cite[Section 3.1]{KwonCorrected}, Kwon also defined the \emph{partial Gauss sum} for $\sigma \in G_{\phi}$ as
\begin{equation}\label{eq:partial-factor}
\calP_\sigma(\phi)=\sum_{\substack{a \bmod q\\a \equiv c_\phi\, (p^c)}}
 \overline\phi^{\,\sigma}(a)\rme_q(a).
\end{equation}
Following \cite[Definition 3.2]{KwonCorrected}, we define the averages of Gauss sums.

\begin{definition} \label{def:average_Gauss_sums}
For an ideal $\fra$ prime to $\frp$, we define the \emph{averaged Gauss sums} as follows:
\begin{equation}\label{eq:Gav-definition}
 \calG_{\mathrm{av}}(\phi,\fra)=\frac{1}{\abs{G_\phi}q}\sum_{\sigma\in G_\phi}
 \calP_\sigma(\phi) \calG_0(\phi^\sigma)\phi^\sigma(\fra),
\end{equation}
and
\begin{equation}\label{eq:Giota-definition}
 \calG_{\mathrm{av}}^{\iota}(\phi,\fra)=\frac{1}{\abs{G_\phi}q}
 \sum_{\sigma\in G_\phi} \calP_\sigma(\phi) \calG_0(\phi^\sigma)W(\phi^\sigma)
 \overline\phi^{\,\sigma}(\fra),
\end{equation}
where $W(\phi) = q^{-1} \calG(\bar\phi)^2\phi(\varpi_{\frp^{2n}\frN})$. The averaged Gauss sums are zero by convention when $\frp\mid\fra$.    
\end{definition}

\section{Automorphic backgrounds and average $L$-values}
\label{sec:automorphic}

In this section, we review the theory of cusp forms on $\GL(2)$ over $F$ and their $L$-functions, following \cite[Section 2]{KwonCorrected}.

\subsection{Cusp forms over $\GL(2)$ and their automorphic $L$-functions}

Let $\Sigma(\RR)$ and $\Sigma(\CC)$ be the sets of real and complex places of $F$, respectively, and let $\sfc$ denote complex conjugation. Let $I_F$ be the set of embeddings $F\hookrightarrow\CC$. A \emph{cohomological weight} is a pair $(\bfk, \bfm)$ with
\[
 \bfk=\sum_{\tau\in I_F}k_\tau\tau \in \ZZ[I_{F}],
 \qquad
 \bfm=\sum_{\tau\in I_F}m_\tau\tau \in \ZZ[I_{F}],
\]
such that $k_\tau\geq2$, $k_\tau=k_{\sfc\tau}$ and $m_{\tau} \geq 0$ for any $\tau \in I_{F}$, and $k_\tau+2m_\tau=:k$ is independent of $\tau \in I_{F}$. 

We denote $L_{\bfk} := \otimes_{\tau \in \Sigma(\CC)}\Sym^{2k_{\tau}-2}(\CC^{2})$ and $C_{F,\infty}^{+} := \SO_2(\RR)^{\Sigma(\RR)} \times \mathrm{SU}_{2}(\CC)^{\Sigma(\CC)}$ which acts on $L_{\bfk}$. Let $\frN$ be a nonzero integral ideal of $F$. We define an open subgroup $U_0(\frN)\subseteq\GL_2(\widehat{\calO}_F)$ by 
\[
U_0(\frN) := \left\{ \begin{bmatrix}
a & b \\ c & d
\end{bmatrix} \in \GL_2(\widehat{\calO}_F) : c \in \widehat{\frN}  \right\},
\]
where $\widehat{A} := A \otimes_{\ZZ} \hatZZ$ for any $\ZZ$-subalgebra $A$ of $F$. Let $\chi$ be a Hecke character modulo $\frN$ with infinite type $(2-k)\sum_{\tau \in I_{F}}\tau \in \ZZ[I_{F}]$. Let $J$ be a subset of $\Sigma(\RR)$.

\begin{definition} \label{def:cusp-form}
A cusp form on $\GL_2(\AA_F)$ of weight $(\bfk, \bfm)$, archimedean type $J$, level $\frN$ and central character $\chi$ is a smooth function $f: \GL_2(\AA_{F}) \rightarrow L_{\bfk}$ such that 
\begin{enumerate}
    \item $D_{\tau}f = \frac{1}{2} k_{\tau} (k_{\tau} - 2)f$ for the Casimir operators $\{D_{\tau} : \tau \in I_{F}\}$ of $\SL_2$,
    \item For elements $\gamma \in \GL_2(F)$, $z \in \AA_{F}^{\times}$, $g \in \GL_2(\AA_{F})$, $u = \begin{bmatrix} a & b \\ c & d \end{bmatrix} \in U_{0}(\frN)$, and $c \in C_{F,\infty}^{+}$, $f$ satisfies the following:
    \[
    f(\gamma z g u c)(\bfx) = \chi(z) \chi\left( \prod_{v \mid \frN} d_{v} \right) \bfe \left( \sum_{\tau \in J} k_{\tau}\theta_{\tau} - \sum_{\tau \in \Sigma(\RR) \smallsetminus J} k_{\tau}\theta_{\tau} \right) f(g)(c_{\tau} \bfx_{\tau})_{\tau \in \Sigma(\CC)},
    \]
    where $c_{\tau} = \begin{bmatrix}
        \cos(2\pi \theta_{\tau}) & -\sin(2\pi \theta_{\tau}) \\ \sin(2\pi \theta_{\tau}) & \cos(2\pi \theta_{\tau})
    \end{bmatrix}$ for $\tau \in \Sigma(\RR)$,
    \item Let $\dif x$ be a Haar measure on $F \bs \AA_{F}$. For $g \in \GL_2(\AA_{F})$, 
    \[
    \int_{F \bs \AA_{F}} f \left( \begin{bmatrix}
        1 & x \\ 0 & 1
    \end{bmatrix} g \right) \dif x = 0.
    \]
\end{enumerate}
From now on, we denote by $S_{(\bfk,\bfm),J}(\frN,\chi)$ the space of such cusp forms $f$. Moreover, 
\begin{enumerate}[label = \rm (\arabic*)]
    \item For $f$ and an ideal $\fra$ of $F$, we denote by $a_{f}(\fra)$ the Fourier--Whittaker coefficient of $f$ at $\fra$.
    \item A cusp form $f$ on $\GL(2)$ is called a Hecke eigenform if $f$ is a common eigenfunction of all the Hecke operators.
    \item A Hecke eigenform is said to be normalised if $a_{f}(\calO_{F}) = 1$.
\end{enumerate}
\end{definition}

Throughout this article, we assume that $f \in S_{(\bfk,\bfm),J}(\frN,\chi)$ is a normalized cuspidal Hecke eigenform on $\GL(2)$ over $F$.

\begin{definition}
The \emph{automorphic $L$-function} of $f$ is defined as
\begin{equation}\label{eq:arithmetic-L-series}
 L(s,f)=\sum_{0\ne\fra\subseteq\calO_F}\frac{a_f(\fra)}{(\Norm\fra)^s}.
\end{equation}   
\end{definition}

Following \cite[Section 2.3]{KwonCorrected}, we define the gamma factor as
\begin{equation*}\label{eq:corrected-gamma}
 \Gamma(s,f)=[F_J^\times:F_+^\times]\left(\frac{\pi}{2}\right)^{\abs{\Sigma(\CC)}}
 \prod_{\tau\in I_F}\frac{\Gamma(s-m_\tau)}{(2\pi)^{s-m_\tau}}.
\end{equation*}
Then the completed $L$-function is defined as
\begin{equation*}\label{eq:completed-L}
 \Lambda(s,f) := D_F^s(\Norm\frN)^{s/2}\Gamma(s,f)L(s,f).
\end{equation*}

We denote $\varpi_{\frc} := \prod_{v \mid \frc} \varpi_{v}^{\ord_{v}(\frc)}$ for any integral ideal $\frc$ of $F$.   The Fricke operator is defined as 
\begin{equation*}\label{eq:corrected-fricke}
 W_{\frN}f(g)=\rmi^{\sum_{\tau\in\Sigma(\RR)}k_\tau}(\Norm\frN)^{1-k/2}
 \overline{\omega}_\chi(\det g)
 f\!\left(g\begin{pmatrix}0&-1\\ \varpi_{\frN}&0\end{pmatrix}\right),
\end{equation*}
where $\omega_{\chi} := \chi \abs{-}^{k-2}$ is a unitarization of $\chi$. Then \cite[Proposition 2.9]{KwonCorrected} proves the following functional equation for $\Lambda(s,f)$:

\begin{proposition}
The completed $L$-function $\Lambda(s,f)$ extends to an entire function of $s \in \CC$, and
\begin{equation}\label{eq:corrected-functional-equation}
 \Lambda(s,f)=W_f\Lambda(k-s,W_{\frN}f),
\end{equation}
where $W_f=(-1)^{\sum_{\tau\in\Sigma(\RR)\setminus J}k_\tau}
 (-1)^{\sum_{\tau\in\Sigma(\CC)}k_\tau}$.
\end{proposition}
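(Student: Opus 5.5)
The plan is the standard adelic Hecke integral representation, since the proposition is the functional equation of $\Lambda(s,f)$ for a cusp form on $\GL_2(\AA_F)$.

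\textbf{Step 1 (Mellin transform).} Consider
\[
Z(s,f)=\int_{F^\times\backslash\AA_F^\times} f\!\left(\begin{pmatrix} y&0\\0&1\end{pmatrix}\right)\abs{y}_{\AA}^{s-?}\,\dif^\times y,
\]
with the power of $\abs{y}_{\AA}$ normalized so that the central point is $k/2$, and pair it with a suitable vector in $L_{\bfk}$ at the complex places.

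\textbf{Step 2 (absolute convergence).} Cuspidality (condition (iii) of Definition~\ref{def:cusp-form}) gives rapid decay of $f$ at the cusps, both as $\abs{y}\to\infty$ and, after using invariance, as $\abs{y}\to 0$. Hence $Z(s,f)$ converges absolutely for all $s$ and is entire.

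\textbf{Step 3 (unfolding).} Unfold using the Fourier--Whittaker expansion
\[
f\!\left(\begin{pmatrix} y&x\\0&1\end{pmatrix}\right)=\sum_{\xi\in F^\times} a_f(\xi y\frd_F)\,W_\infty(\xi y_\infty)\,\bfe_F(\xi x).
\]
This expresses $Z(s,f)$, for $\mathrm{Re}(s)\gg0$, as $L(s,f)$ times an archimedean integral. The archimedean integral is computed place by place. For a real $\tau$ the Whittaker function is $y^{k_\tau/2}e^{-2\pi y}$, giving a factor $\Gamma(s-m_\tau)(2\pi)^{-(s-m_\tau)}$. For a complex place, Bessel $K$-functions appear, and their Mellin transform yields a product $\Gamma(s-m_\tau)\Gamma(s-m_{\sfc\tau})$ together with the constant $\pi/2$. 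The sum over signs at real places, which depends on $J$, produces the index $[F_J^\times:F_+^\times]$. The factors $D_F^s$ and $(\Norm\frN)^{s/2}$ come from the normalization of the measure and of $\frd_F$ and $\frN$. Altogether this gives $Z(s,f)=\Lambda(s,f)$ up to harmless constants.

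\textbf{Step 4 (functional equation).} Substitute $y\mapsto 1/(\varpi_{\frN}y)$ and use left $\GL_2(F)$-invariance with $\gamma=\begin{pmatrix}0&1\\-1&0\end{pmatrix}$:
\[
f\!\left(\begin{pmatrix} y&0\\0&1\end{pmatrix}\right)=f\!\left(\gamma\begin{pmatrix} y&0\\0&1\end{pmatrix}\right)=f\!\left(\begin{pmatrix}1&0\\0&y\end{pmatrix}\gamma\right).
\]
Then pull out the center $y$ via $\chi$. Inserting $\begin{pmatrix}0&-1\\ \varpi_{\frN}&0\end{pmatrix}$ turns this into the definition of $W_{\frN}f$ evaluated at $\mathrm{diag}(y',1)$. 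The unitarization $\overline{\omega}_\chi(\det g)$ and the factor $(\Norm\frN)^{1-k/2}$ convert $\abs{y}^{s}$ into $\abs{y'}^{k-s}$. This yields $Z(s,f)=W_f\,Z(k-s,W_{\frN}f)$.

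\textbf{Main obstacle.} The main obstacle is bookkeeping the sign $W_f$. It arises from $\gamma$ acting on the archimedean components: on real places, the $\SO_2$-rotation by angle $\pi/2$ gives $\bfe(\pm k_\tau/4)$ according to whether $\tau\in J$. On complex places, the action of the corresponding $\mathrm{SU}_2$ element on $\Sym^{2k_\tau-2}$ contributes $(-1)^{k_\tau}$. These combine with the normalizing factor $\rmi^{\sum k_\tau}$ in $W_{\frN}$ to leave exactly $(-1)^{\sum_{\Sigma(\RR)\setminus J}k_\tau}(-1)^{\sum_{\Sigma(\CC)}k_\tau}$. I would verify this place by place. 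For the remaining normalizations, such as the index $[F_J^\times:F_+^\times]$ and the $\pi/2$ factors, I would match the constants with Hida's conventions and defer to \cite[Proposition 2.9]{KwonCorrected}.
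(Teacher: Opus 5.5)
Your outline is the standard adelic Hecke (Mellin-transform) argument, and that is what the paper relies on: the paper gives no proof of its own and simply cites \cite[Proposition 2.9]{KwonCorrected} (which follows Hida), so deferring the constants to that reference is consistent with the paper. One small bookkeeping fix: the factor $(\Norm\frN)^{s/2}$ does not come out of the unfolding in Step 3 but from the substitution $y\mapsto 1/(\varpi_{\frN}y)$ in Step 4, which for the raw integral gives $Z(s,f)=W_f(\Norm\frN)^{k/2-s}Z(k-s,W_{\frN}f)$ rather than $Z(s,f)=W_fZ(k-s,W_{\frN}f)$; since $\Lambda(s,f)$ is a constant multiple of $(\Norm\frN)^{s/2}Z(s,f)$, this symmetrizes to the stated functional equation.
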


\subsection{Galois-averaged $L$-function and its decomposition}

The goal of this subsection is to define the Galois-averaged $L$-function and establish its decomposition \eqref{eq:averaged-afe}.

\subsubsection{Mellin transform}
Let $\Psi$ be a compactly supported smooth function on $\RR_{>0}$ such that \footnote{There seems to be a typographical error in \cite[page 124]{KwonCorrected}, where the normalisation of $\Psi$ was written as $\int_{0}^{\infty} \Psi(y) \dif y = 1$. We have corrected it here.} $\int_{0}^{\infty} \Psi(y)/y \dif y = 1$, and put
\[
 \Phi(z) := \int_0^\infty\Psi(u)u^z\frac{\dif u}{u}.
\]
For $j=1,2$, define
\begin{equation}\label{eq:V-definitions}
 V_{j,s}(x)=\frac{1}{2\pi\rmi}\int_{2 + \rmi \RR}\Phi((-1)^{j-1}z)\Gamma(s+z,f)x^{-z}\frac{\dif z}{z}.
\end{equation}
It is shown in \cite[Equation (2.3)]{KwonCorrected} that for any fixed $A>0$,
\begin{equation}\label{eq:kernel-bounds}
 V_{j,k/2}(x)\ll_A(1+x)^{-A},
 \text{ and }
 V_{j,k/2}(x)=\Gamma(k/2,f)+O(x^{1/2}) \text{ for } 0 < x \leq 1.
\end{equation}

\subsubsection{Twisted cusp forms}
Let $\phi \in \Xi_{\frp}$ be Hecke character of exact conductor $\frp^{n}$. In \cite[Section 2.5]{KwonCorrected}, Kwon defined the \emph{twisted cusp form}
\[
(f \otimes \phi) (g) := \dfrac{\phi(\det g)}{\calG(\phi)} \sum_{u \in \calR_{\frc}} \phi_{\frp}(p^{n}u_{\frp}) f\left( g \begin{bmatrix}
    1 & u \\ 0 & 1
\end{bmatrix} \right). 
\]
Then the twisted cusp form $f\otimes\phi\in S_{(\bfk,\bfm),J}(\frN\frc^2,\chi\phi^2)$ satisfies $a_{f\otimes\phi}(\fra)=a_f(\fra)\phi(\fra)$ by \cite[Section 6]{Hida}. It is proved in \cite[Proposition 2.10]{KwonCorrected} that 
\begin{equation}\label{eq:corrected-fricke-twist}
 W_{\frN\frc^2}(f\otimes\phi)
 =\omega_\chi(\varpi_{\frc})W(\phi)(W_{\frN}f)\otimes\bar\phi,
\end{equation}
where
\begin{equation}\label{eq:corrected-root-factor}
 W(\phi)=(\Norm\frc)^{-1} \calG(\bar\phi)^2\phi(\varpi_{\frc^2\frN}),
 \text{ with } \abs{W(\phi)}=1.
\end{equation}

\subsubsection{Decomposition of $L$-functions and average $L$-functions}
As shown in \cite[Equation (2.4)]{KwonCorrected}, applying the functional equation \eqref{eq:corrected-functional-equation} to the Mellin integral gives, at the central point $s = k/2$ and for every $y>0$,
\begin{multline}\label{eq:corrected-afe}
 \Gamma(k/2,f)L(k/2,f\otimes\phi) = \sum_{\fra}\frac{a_f(\fra)\phi(\fra)}{(\Norm\fra)^{k/2}}
 V_{1,k/2}\!\left(\frac{\Norm\fra}{y}\right) \\
 +\omega_\chi(\varpi_{\frc})W_fW(\phi)
 \sum_{\fra}\frac{a_{W_{\frN}f}(\fra)\bar\phi(\fra)}{(\Norm\fra)^{k/2}}
 V_{2,k/2}\!\left(\frac{\Norm\fra\,y}{\Norm(\frN\frc^2)}\right).
\end{multline}
Both sums run over nonzero integral ideals and converge absolutely because of \eqref{eq:kernel-bounds}.

\begin{definition} \label{def:averaged-L-function}
Following \cite[Definition 3.9]{KwonCorrected} \footnote{As remarked before, we replace the appearance of the Gauss sum $\calG(\phi)$ in \cite{KwonCorrected} by $\calG_{0}(\phi)$ here.}, we define the \emph{Galois-averaged $L$-function} of $f$ twisted by $\phi$ as
\begin{equation}\label{eq:additive-average-intro}
 L_{\mathrm{av}}(f,\phi) =\frac{1}{\abs{G_\phi}q} \sum_{\substack{a\bmod q\\a\equiv c_\phi\, \bmod{p^c}}} \rme_q(a) \sum_{\sigma\in G_\phi} \overline{\phi}^{\,\sigma}(a) \calG_{0}(\phi^\sigma) L\!\left(\frac{k}{2},f\otimes\phi^\sigma\right).
\end{equation}
\end{definition}

Together with \eqref{eq:corrected-afe}, the definitions of $\calG_{\mathrm{av}}$ and $\calG_{\mathrm{av}}^{\iota}$ in Definition \ref{def:average_Gauss_sums} give the following decomposition
\begin{equation}\label{eq:averaged-afe}
 L_{\mathrm{av}}(f,\phi)=S_1(\phi;y)+S_2(\phi;y),
\end{equation}
where
\begin{equation}
    S_1(\phi;y) =\frac{1}{\Gamma(k/2,f)}\sum_{\fra} \frac{a_f(\fra)\calG_{\mathrm{av}}(\phi,\fra)}{(\Norm\fra)^{k/2}} V_{1,k/2}\!\left(\frac{\Norm\fra}{y}\right),\label{eq:S1}
\end{equation}
called the \emph{ordinary part} of $L_{\mathrm{av}}(f,\phi)$, and
\begin{equation}
 S_2(\phi;y) =\frac{\omega_\chi(\varpi_{\frp^n})W_f}{\Gamma(k/2,f)}\sum_{\fra} \frac{a_{W_{\frN}f}(\fra)\calG_{\mathrm{av}}^{\iota}(\phi,\fra)}{(\Norm\fra)^{k/2}} V_{2,k/2}\!\left(\frac{\Norm\fra\,y}{\Norm\frN\,q^2}\right),
 \label{eq:S2}
\end{equation}
called the \emph{Fricke part} of $L_{\mathrm{av}}(f,\phi)$. Both sums run over nonzero integral ideals of $F$. The estimate for the Galois-averaged $L$-function $L_{\mathrm{av}}(f, \phi)$ reduces to estimates for its ordinary and Fricke parts, which we provide in Sections \ref{sec:estimate-ordinary} and \ref{sec:estimate-fricke} in that order.

\subsection{Fourier-coefficient bounds}
We assume that there exists an exponent $0\leq\theta<1/4$ such that, for every $\varepsilon>0$,
\begin{equation}\label{eq:central-coefficient-bound-auto}
 \frac{\abs{a_f(\fra)}+\abs{a_{W_{\frN}f}(\fra)}}{(\Norm\fra)^{k/2}}
 \ll_{F,f,\varepsilon}
 (\Norm\fra)^{-1/2+\theta+\varepsilon}
\end{equation}
for every nonzero integral ideal $\fra$. The known unconditional estimates allow one to take $\theta=7/64$ (see \cite{BB,Nakasuji}), which is sufficient for our argument; we fix this choice of $\theta$.

Fix $\varepsilon \in (0, \frac{1}{4} - \theta)$ and define $t=\theta+\varepsilon$, $s_0=\frac{1}{2}-t$. Then
\begin{equation} \label{eq:range-t}
    0<t<\frac{1}{4}, \qquad \frac{1}{4}<s_0<\frac{1}{2}. 
\end{equation}
With this notation, \eqref{eq:central-coefficient-bound-auto} gives
\begin{equation}\label{eq:central-coefficient-bound}
\frac{\abs{a_f(\fra)}}{(\Norm\fra)^{k/2}}\leq \frac{\abs{a_f(\fra)}+\abs{a_{W_{\frN}f}(\fra)}}{(\Norm\fra)^{k/2}} \ll_{F,f,\varepsilon}(\Norm\fra)^{-s_0}.
\end{equation}
Similarly,
\begin{equation}\label{eq:central-coefficient-bound2}
\frac{\abs{a_{W_{\frN}f}(\fra)}}{(\Norm\fra)^{k/2}} \leq \frac{\abs{a_f(\fra)}+\abs{a_{W_{\frN}f}(\fra)}}{(\Norm\fra)^{k/2}} \ll_{F,f,\varepsilon}(\Norm\fra)^{-s_0}.
\end{equation}
\section{Estimate on the ordinary part}
\label{sec:estimate-ordinary}

Fix $c$ by \eqref{eq:cyclotomic-intersection-intro}, let $\phi\in\Xi_{\frp}$ have conductor $\frp^n$, and assume $n>2c$. Put $q=p^n$ and $G_\phi=\Gal(\QQ(\phi)/\QQ(\mu_{p^c}))$.

\subsection{Averaged characters and Gauss sums}

For an idele class or an ideal $\fra$ prime to $\frp$, define
\begin{equation}\label{eq:ordinary-character-average}
 \phi_{\mathrm{av}}(\fra)=\frac{1}{\abs{G_\phi}}\sum_{\sigma\in G_\phi}\phi^\sigma(\fra) = \dfrac{1}{[\QQ(\phi):\QQ(\mu_{p^{c}})]} \Tr_{\QQ(\phi)/\QQ(\mu_{p^{c}})}(\phi(\fra)).
\end{equation}

\begin{lemma}\label{lem:trace}
For every $\fra$ as above, we have
\begin{equation}\label{eq:trace-formula}
 \phi_{\mathrm{av}}(\fra)=
 \begin{cases}
 \phi(\fra),&\phi(\fra)\in\mu_{p^c},\\
 0,&\phi(\fra)\notin\mu_{p^c}.
 \end{cases}
\end{equation}
In particular, $\abs{\phi_{\mathrm{av}}(\fra)}\leq1$ and $\phi_{\mathrm{av}}(1)=1$.
\end{lemma}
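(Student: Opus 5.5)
The plan is to compute the normalized trace $\frac{1}{[L:K]}\Tr_{L/K}$ of a root of unity, where $L=\QQ(\phi)$ and $K=\QQ(\mu_{p^c})$. First we identify these fields. Since $\phi$ has $p$-power order and conductor $\frp^n$, its values are $p$-power roots of unity. By Lemma \ref{lem:faithful}, the local component $\phi_{\frp}$ already attains all of $\mu_{p^{n-1}}$. Hence $L=\QQ(\mu_{p^m})$ for some $m\geq n-1>c$, and in particular $L\supseteq K$. Moreover $\zeta:=\phi(\fra)$ lies in $\mu_{p^m}$, with exact order $p^j$ for some $j\leq m$.

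Next we split into cases according to whether $\zeta\in K$. If $\zeta\in\mu_{p^c}\subseteq K$, then every $\sigma\in G_\phi$ fixes $\zeta$. Each summand $\phi^\sigma(\fra)=\sigma(\zeta)$ therefore equals $\zeta$, and the average is $\phi(\fra)$.

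If $\zeta\notin\mu_{p^c}$, then $j>c$, and the field $K(\zeta)=\QQ(\mu_{p^j})$ is strictly larger than $K$. By transitivity of the trace,
\[
\Tr_{L/K}(\zeta)=\Tr_{K(\zeta)/K}\bigl([L:K(\zeta)]\,\zeta\bigr)=[L:K(\zeta)]\,\Tr_{\QQ(\mu_{p^j})/\QQ(\mu_{p^c})}(\zeta).
\]
To evaluate the remaining trace, we use that $\zeta$ has minimal polynomial $X^{p^{j-c}}-\zeta^{p^{j-c}}$ over $\QQ(\mu_{p^c})$. Indeed, this polynomial has degree $p^{j-c}=[\QQ(\mu_{p^j}):\QQ(\mu_{p^c})]$, where $p$ is odd and $c\geq1$. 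Its $X^{p^{j-c}-1}$ coefficient vanishes because $p^{j-c}\geq p>1$, so the trace is $0$.

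Finally, the bounds. We have $\abs{\phi_{\mathrm{av}}(\fra)}\le1$ since each value is either $0$ or a root of unity. We also have $\phi_{\mathrm{av}}(1)=1$ because $\phi(1)=1\in\mu_{p^c}$.

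The only mildly delicate step is the degree computation $[\QQ(\mu_{p^j}):\QQ(\mu_{p^c})]=p^{j-c}$. This follows from $\varphi(p^j)/\varphi(p^c)$, and it requires $c\geq1$, which the paper's choice of $c$ guarantees.
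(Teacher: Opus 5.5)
Your proof is correct, but it takes a somewhat different route from the paper's.

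The paper writes $\QQ(\phi)=\QQ(\mu_{p^r})$ and identifies $G_\phi$ explicitly with $\{\sigma_{1+p^cj}:0\le j<p^{r-c}\}$. It then evaluates $\phi_{\mathrm{av}}(\fra)=\zeta_{p^r}^{a}\,p^{-(r-c)}\sum_{j}\zeta_{p^{r-c}}^{aj}$ as a geometric series, which is orthogonality on the cyclic group $(1+p^c\ZZ)/p^r\ZZ$.

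You never parametrize $G_\phi$. Instead you use transitivity of the trace to pass to the subextension $\QQ(\mu_{p^j})/\QQ(\mu_{p^c})$ generated by $\zeta$. There you read the vanishing of the trace off the subleading coefficient of the binomial minimal polynomial $X^{p^{j-c}}-\zeta^{p^{j-c}}$.

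Both arguments rest on the same fact: $[\QQ(\mu_{p^j}):\QQ(\mu_{p^c})]=p^{j-c}$ when $c\ge1$. In the paper this is hidden in the claim that the subgroup $\{b\equiv1\bmod p^c\}$ of $(\ZZ/p^r\ZZ)^\times$ consists of exactly the $p^{r-c}$ elements $1+p^cj$. Your version makes the role of $c\ge1$ explicit. For $c=0$ and $j=1$, the minimal polynomial would be the $p$-th cyclotomic polynomial, whose subleading coefficient is $1$, and the normalized trace would be $-1/(p-1)$ rather than $0$.

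So the paper's proof is a concrete computation with a chosen generator $\zeta_{p^r}$. Yours is more structural, needs no such choice, and is equally short.
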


\begin{proof}
Let $p^r$ be the exact order of $\phi$. Then $\QQ(\phi) = \QQ(\mu_{p^{r}})$. If $\phi$ has conductor $\frp^n$, Lemma \ref{lem:faithful} says that its restriction to $U_1/U_n=(1+p\ZZ_p)/(1+p^n\ZZ_p)$ is injective. This group has order $p^{n-1}$, so the image of $\phi$ contains $\mu_{p^{n-1}}$. Hence $r\geq n-1$. Under the standing assumption $n>c+1$, we have $r>c$. As a result, 
\[
G_{\phi} = \Gal(\QQ(\mu_{p^{r}})/\QQ(\mu_{p^{c}})) \simeq \{b\in(\ZZ/p^r\ZZ)^\times:b\equiv1\pmod{p^c}\}
\]
via the identification $b \mapsto [\sigma_{b}: \zeta_{p^{r}} \mapsto \zeta_{p^{r}}^{b}]$. Every such $b$ can be written as $1+p^c j$ for $0\leq j<p^{r-c}$, so we also write $\sigma_{j} := \sigma_{1+p^{c}j}$.

Write $\phi(\fra)=\zeta_{p^r}^a$, then $\phi^{\sigma_j}(\fra) =\sigma_j(\phi(\fra)) =\sigma_j(\zeta_{p^r}^{a}) =\zeta_{p^r}^{\,a(1+p^cj)}$, and hence
\[
\phi_{\mathrm{av}}(\fra) = \frac{1}{p^{r-c}}\sum_{j=0}^{p^{r-c}-1} \zeta_{p^r}^{a(1+p^cj)} = \zeta_{p^r}^{a} \frac{1}{p^{r-c}} \sum_{j=0}^{p^{r-c}-1} \left(\zeta_{p^r}^{\,ap^c}\right)^j = \zeta_{p^r}^{a} \frac{1}{p^{r-c}}
\sum_{j=0}^{p^{r-c}-1}
\zeta_{p^{r-c}}^{aj}.
\]
If $\zeta_{p^{r-c}}^{a} = 1$ (which is equivalent to $\phi(\fra) \in \mu_{p^{c}}$), then every summand equals one, and hence $\phi_{\mathrm{av}}(\fra) = \zeta_{p^{r}}^{a}$. Otherwise (i.e. $\phi(\fra) \not\in \mu_{p^{c}}$), the sum over $j$ is zero by the geometric series identity because $(\zeta_{p^{r-c}}^{a})^{p^{r-c}} = 1$. The lemma then follows.
\end{proof}

The following lemma appears as \cite[Lemma 3.3]{KwonCorrected}.
\begin{lemma}\label{lem:partial-gauss-identity}
For every ideal $\fra$,
\begin{equation}\label{eq:Gav-equals-trace}
 \calG_{\mathrm{av}}(\phi,\fra)=\phi_{\mathrm{av}}(\fra).
\end{equation}
Both sides are understood to be zero when $\frp\mid\fra$.
\end{lemma}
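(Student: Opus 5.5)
Since $\calG_{\mathrm{av}}(\phi,\fra)$ is an average over $G_\phi$ of $\calP_\sigma(\phi)\calG_0(\phi^\sigma)\phi^\sigma(\fra)$, while $\phi_{\mathrm{av}}(\fra)$ is the plain average of $\phi^\sigma(\fra)$, the plan is to show the weight is constant:
\[
\calP_\sigma(\phi)\,\calG_0(\phi^\sigma)=q \qquad\text{for every }\sigma\in G_\phi .
\]
Then \eqref{eq:Gav-equals-trace} follows at once from \eqref{eq:Gav-definition} and \eqref{eq:ordinary-character-average}. The case $\frp\mid\fra$ holds by convention.

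First I evaluate $\calG_0(\phi^\sigma)$. Since $\phi^\sigma$ is again a $p$-power order character of exact conductor $\frp^n$, this is a primitive Gauss sum modulo $q$, and the main task is to compute it via the standard stationary-phase trick. Write $a=b(1+p^{n-c}j)$ with $b$ running modulo $p^{n-c}$ and $j\bmod p^c$. By \eqref{eq:cphi-intro},
\[
\phi^\sigma_{\frp}(1+p^{n-c}j)=\exp(2\pi\rmi\, j c_{\phi^\sigma}/p^c).
\]
For $\sigma=\sigma_{1+p^c m}$, the value $\sigma$ fixes $\mu_{p^c}$, so $c_{\phi^\sigma}=c_\phi$. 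Also $\rme_q(b(1+p^{n-c}j))=\rme_q(b)\exp(2\pi\rmi\, bj/p^c)$. Summing over $j$ forces $b\equiv -c_\phi\pmod{p^c}$, giving
\[
\calG_0(\phi^\sigma)=p^c\sum_{\substack{b\bmod p^{n-c}\\ b\equiv -c_\phi\,(p^c)}}\phi^\sigma_{\frp}(b)\,\rme_q(b).
\]
This requires $n-c\ge c$ so that the decomposition is well defined, which $n>2c$ gives. Using $\phi_{\frp}(-1)=1$ and $a\mapsto -a$, this relates directly to the conjugate of the partial sum: $\overline{\calP_\sigma(\phi)}$ is the sum over $a\equiv -c_\phi$ (mod $p^c$) of $\phi^\sigma(a)\rme_q(-a)$.

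Next I compute $\calP_\sigma(\phi)$ the same way. Writing $a=b(1+p^{n-c}j)$ with $b\equiv c_\phi\pmod{p^c}$ is awkward because $j$ shifts $b$ modulo $p^n$ only. Instead I use $|\calG_0(\phi^\sigma)|^2=q$ together with the structure above. Concretely, the sum over $a\equiv c_\phi\pmod{p^c}$ modulo $q$ splits into $a=b(1+p^{n-c}j)$ with $b$ modulo $p^{n-c}$, $b\equiv c_\phi$. The $j$-sum then gives the factor
\[
\sum_j \exp\bigl(2\pi\rmi\, j(b-c_\phi)/p^c\bigr)=p^c,
\]
so
\[
\calP_\sigma(\phi)=p^c\sum_{b\equiv c_\phi}\overline\phi^\sigma(b)\,\rme_q(b)=\overline{\calG_0(\phi^\sigma)}\cdot(\text{sign from }b\mapsto -b).
\]
Combining the two, $\calP_\sigma(\phi)\calG_0(\phi^\sigma)=|\calG_0(\phi^\sigma)|^2=q$, since $\phi^\sigma$ is primitive modulo $q$.

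The main obstacle is the bookkeeping in these two reductions. One must check that the parametrisation $a=b(1+p^{n-c}j)$ hits each residue class exactly once, which may require choosing $b$ in a fixed set of representatives. One must also get the signs right: the congruence class $c_\phi$ versus $-c_\phi$, and the additive-character sign conventions. I would verify the final identity directly as a fallback. Expand $\calP_\sigma(\phi)\calG_0(\phi^\sigma)$ as a double sum, substitute $x=a'a^{-1}$, and use orthogonality. The constraint $a\equiv c_\phi\pmod{p^c}$ should then select exactly the diagonal contribution $q$, with everything else vanishing.
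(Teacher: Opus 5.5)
Your argument is correct, but it is organized differently from the paper's. You isolate the $\sigma$-by-$\sigma$ identity $\calP_\sigma(\phi)\calG_0(\phi^\sigma)=q$ and prove it in three steps. First you localize the full Gauss sum $\calG_0(\phi^\sigma)$ onto the single class $a\equiv -c_\phi\pmod{p^c}$, using \eqref{eq:cphi-intro} and $c_{\phi^\sigma}=c_\phi$. Then you recognize $\calP_\sigma(\phi)=\overline{\calG_0(\phi^\sigma)}$ through $a\mapsto -a$ and $\phi_{\frp}(-1)=1$. Finally you quote $\abs{\calG_0(\phi^\sigma)}^2=q$ for primitive characters.

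The paper never separates the weight from the Galois average. It substitutes $a=a_1a_2$ and uses additive orthogonality in $a_1=c_\phi+p^cb$ to force $a_2\equiv-1\pmod{p^{n-c}}$. It then uses that $\phi(a_2)=\rme_q(-c_\phi(1+a_2))\in\mu_{p^c}$ is $G_\phi$-fixed, so it can be pulled out of $\phi_{\mathrm{av}}$; the phases cancel and the $p^c$ surviving $a_2$ give exactly $\phi_{\mathrm{av}}(\fra)$. That route needs only orthogonality and \eqref{eq:cphi-intro}. Yours additionally invokes $\abs{\calG_0}^2=q$, but yields the more informative fact that the ``partial'' Gauss sum is simply the conjugate of the full one.

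Your fallback is the paper's computation for a single $\sigma$, with one correction. What survives orthogonality is not a diagonal term but the $p^c$ classes $a_2\equiv-1\pmod{p^{n-c}}$. Each contributes $p^{n-c}$ only after the phase cancellation coming from \eqref{eq:cphi-intro}.

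There is also one slip to fix. $\overline{\calP_\sigma(\phi)}$ equals $\sum_{a\equiv -c_\phi}\phi^\sigma_{\frp}(a)\rme_q(a)$, not $\sum_{a\equiv -c_\phi}\phi^\sigma_{\frp}(a)\rme_q(-a)$. The latter is the $a\equiv c_\phi$ part of $\calG_0(\phi^\sigma)$, and it vanishes by your own localization since $p$ is odd. Your second paragraph states the relation correctly (the ``sign'' is $\phi^\sigma_{\frp}(-1)=1$), which makes its separate $j$-sum redundant. No choice of representatives is needed either: on $b\equiv-c_\phi\pmod{p^c}$ the summand $\phi^\sigma_{\frp}(b)\rme_q(b)$ is invariant under $b\mapsto b(1+p^{n-c}j)$.
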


\begin{proof}
Assume $(\fra,\frp)=1$. Expanding the definitions gives
\begin{align*}
 \calG_{\mathrm{av}}(\phi,\fra) =\frac{1}{q}\sum_{a_1,a}\phi_{\mathrm{av}}(a_1^{-1}a\fra)\rme_q(a_1+a) =\frac{1}{q}\sum_{a_1,a_2}\phi_{\mathrm{av}}(a_2\fra)\rme_q(a_1(1+a_2)),
\end{align*}
where $a_1\equiv c_\phi\pmod{p^c}$ and $a,a_2$ range over $(\ZZ/q\ZZ)^\times$. Write $a_1=c_\phi+p^cb$. Orthogonality in $b\bmod p^{n-c}$ gives
\begin{equation}\label{eq:partial-orthogonality}
 \calG_{\mathrm{av}}(\phi,\fra)=\frac{1}{p^c}
 \sum_{a_2\equiv-1\, \bmod p^{n-c}}
 \phi_{\mathrm{av}}(a_2\fra)\rme_q(c_\phi(1+a_2)).
\end{equation}
For such $a_2$, equation \eqref{eq:cphi-intro} and the triviality of a $p$-power character on $-1$ give
\[
 \phi(a_2)=\rme_q(-c_\phi(1+a_2)).
\]
This value belongs to $\mu_{p^c}$ and is fixed by $G_\phi$, so $\phi_{\mathrm{av}}(a_2\fra)=\phi(a_2)\phi_{\mathrm{av}}(\fra)$. The two $\rme_{q}$-factors cancel in \eqref{eq:partial-orthogonality}. There are exactly $p^c$ possible $a_2$, and \eqref{eq:Gav-equals-trace} follows.
\end{proof}

\subsection{Bounded fiber arguments} \label{sec:bounded-fiber}

Let $e=e_F$ be the exponent of the class group $\Cl(F)$ of $F$. Choose integral ideals $\frc_1,\ldots,\frc_{h_F}$, prime to $\frp$, such that $\{\frc_i^{-1}\}$ is a complete set of ideal-class representatives and $\frc_1=\calO_F$. Choose $b_i\in\calO_F$ satisfying
\begin{equation}\label{eq:class-generators}
 \frc_i^e=(b_i).
\end{equation}
Every integral ideal $\fra$ in the class of $\frc_i^{-1}$ has a presentation
\begin{equation}\label{eq:ideal-parametrization}
 \fra=(\alpha)\frc_i^{-1},
 \qquad \alpha\in\frc_i,
\end{equation}
unique up to multiplication of $\alpha$ by a global unit by the exact sequence \eqref{eq:infinite-ray}. If $(\fra,\frp)=1$, then $\alpha$ and $b_i$ are units in $F_{\frp}$.

\begin{proposition}\label{prop:power-support}
Suppose $\phi_{\mathrm{av}}(\fra)\ne0$. If $\fra^e=(\beta)$, then
\begin{equation}\label{eq:beta-principal-congruence}
 \beta^{p-1}\equiv1\pmod{\frp^{n-c}}.
\end{equation}
If $\fra=(\alpha)\frc_i^{-1}$ as in \eqref{eq:ideal-parametrization}, then
\begin{equation}\label{eq:alpha-class-congruence}
 \alpha^{e(p-1)}\equiv b_i^{p-1}\pmod{\frp^{n-c}}.
\end{equation}
\end{proposition}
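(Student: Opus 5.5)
By Lemma \ref{lem:trace}, the hypothesis $\phi_{\mathrm{av}}(\fra)\neq0$ means exactly that $\phi(\fra)\in\mu_{p^c}$. Since $\mu_{p^c}$ is a group, we also get $\phi(\fra^e)=\phi(\fra)^e\in\mu_{p^c}$, that is, $\phi((\beta))^{p^c}=1$. Note that $\beta$ is prime to $\frp$, because $\fra$ is. The principal relation \eqref{eq:principal-relation} then gives $\phi_{\frp}(\beta)^{p^c}=\phi((\beta))^{-p^c}=1$, where $\beta$ is viewed in $\calO_{F,\frp}^\times=\ZZ_p^\times$ via the fixed identification $F_{\frp}=\QQ_p$.

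Next we apply Lemma \ref{lem:faithful} with $x=\beta$. Its hypothesis $0\le c<n-1$ holds since $n>2c$ and $c\ge1$. The equivalence \eqref{eq:local-order-cutoff} turns $\phi_{\frp}(\beta)^{p^c}=1$ into $\beta^{p-1}\equiv1\pmod{p^{n-c}}$ in $\ZZ_p$. Because $\frp$ is totally split, $\calO_F/\frp^{n-c}\cong\ZZ/p^{n-c}\ZZ$ compatibly with the embedding into $\ZZ_p$. Hence this is the congruence \eqref{eq:beta-principal-congruence} modulo $\frp^{n-c}$ in $\calO_F$.

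For the second assertion, write $\fra=(\alpha)\frc_i^{-1}$. Then
\[
\fra^e=(\alpha^e)\frc_i^{-e}=(\alpha^e b_i^{-1}),
\]
so we may take $\beta=\alpha^e b_i^{-1}$, which is a $\frp$-adic unit because both $\alpha$ and $b_i$ are. The conclusion of the first part does not depend on the choice of generator $\beta$: any two generators differ by a global unit $u$, and $u^{p-1}\equiv1\pmod{\frp^n}$ by \eqref{eq:unit-congruence}. Alternatively, one can simply rerun the first argument with this particular $\beta$. Either way we obtain $(\alpha^e b_i^{-1})^{p-1}\equiv1\pmod{\frp^{n-c}}$. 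Multiplying through by the unit $b_i^{p-1}$ at $\frp$ gives \eqref{eq:alpha-class-congruence}.

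The only point that needs care is the passage from the ideal value $\phi(\fra^e)$ to the local value $\phi_{\frp}(\beta)$, which goes through \eqref{eq:principal-relation}. One must check that the sign inversion there is harmless; it is, since $\mu_{p^c}$ is closed under inverses. One should also make sure that congruences in $\ZZ_p$ translate to congruences modulo $\frp^{n-c}$ in $\calO_F$, which uses $e(\frp\mid p)=f(\frp\mid p)=1$. Everything else is a direct combination of Lemma \ref{lem:trace} and Lemma \ref{lem:faithful}.
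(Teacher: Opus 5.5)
Your proposal is correct and follows essentially the same route as the paper: Lemma \ref{lem:trace} gives $\phi(\fra)\in\mu_{p^c}$, the relation \eqref{eq:principal-relation} turns $\phi(\fra)^e$ into $\phi_{\frp}(\beta)^{-1}$, Lemma \ref{lem:faithful} gives the congruence, and the second assertion follows by taking $\beta=\alpha^e/b_i$. Your extra checks are correct but only spell out what the paper leaves implicit: that $c<n-1$ follows from $n>2c$, that $e(\frp\mid p)=1$ lets you pass between $p^{n-c}$ and $\frp^{n-c}$, and that the conclusion does not depend on the choice of generator.
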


\begin{proof}
Lemma~\ref{lem:trace} gives $\phi(\fra)\in\mu_{p^c}$. By our convention, $(\fra, \frp) = 1$, and hence $v_{\frp}(\beta) = 0$. Since $(\beta)=\fra^e$, equation \eqref{eq:principal-relation} then gives
\[
 \phi_{\frp}(\beta)^{-1}=\phi((\beta))=\phi(\fra)^e\in\mu_{p^c}.
\]
Applying Lemma~\ref{lem:faithful} proves \eqref{eq:beta-principal-congruence}. In the presentation \eqref{eq:ideal-parametrization}, equation \eqref{eq:class-generators} gives $\fra^e=(\alpha^e/b_i)$. Substitution into \eqref{eq:beta-principal-congruence} gives \eqref{eq:alpha-class-congruence}, as desired.
\end{proof}

\begin{lemma}\label{lem:power-fibres}
For $M\geq1$ and $u\in(\ZZ/p^M\ZZ)^\times$, the congruence
\[
 x^{e(p-1)}\equiv u\pmod{p^M}
\]
has either no solution or
\[
 \gcd(e(p-1),(p-1)p^{M-1})=(p-1)p^{\min\{v_p(e),M-1\}} \leq C_{e,p} := (p-1)p^{v_p(e)}
\]
solutions. In particular, the upper bound $C_{e,p}$ is independent of $M$ and $u$.
\end{lemma}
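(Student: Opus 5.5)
The plan is to use that $(\ZZ/p^M\ZZ)^\times$ is cyclic of order $(p-1)p^{M-1}$, since $p$ is odd. In a cyclic group $G$ of order $N$, the power map $x\mapsto x^m$ is a group homomorphism. Its kernel is $\{x: x^m=1\}$, which has exactly $\gcd(m,N)$ elements. Every nonempty fibre of a homomorphism is a coset of its kernel. So the congruence $x^{e(p-1)}\equiv u\pmod{p^M}$ has either no solutions or exactly $\gcd(e(p-1),(p-1)p^{M-1})$ of them.

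The next step computes this gcd. Factor out $p-1$:
\[
\gcd(e(p-1),(p-1)p^{M-1})=(p-1)\gcd(e,p^{M-1}).
\]
Also $\gcd(e,p^{M-1})=p^{\min\{v_p(e),M-1\}}$, because the only prime dividing $p^{M-1}$ is $p$. This gives the stated formula. The bound $p^{\min\{v_p(e),M-1\}}\le p^{v_p(e)}$ is immediate, so the count is at most $C_{e,p}=(p-1)p^{v_p(e)}$. This bound depends only on $e$ and $p$, not on $M$ or $u$.

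The only input needing justification is cyclicity. It follows from the decomposition \eqref{eq:local-units}, $\ZZ_p^\times=\mu_{p-1}\times U_1$, reduced modulo $p^M$:
\[
(\ZZ/p^M\ZZ)^\times\cong\mu_{p-1}\times U_1/U_M.
\]
Here $U_1/U_M$ is cyclic of order $p^{M-1}$, as recorded after \eqref{eq:local-units}. The two factors have coprime orders, so their product is cyclic. The case $M=1$ is included, with $U_1/U_1$ trivial. I do not expect any real obstacle here; the main point is simply to state the kernel-coset argument cleanly.
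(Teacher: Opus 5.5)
Your proposal is correct and takes essentially the same approach as the paper: cyclicity of $(\ZZ/p^M\ZZ)^\times$ for odd $p$, with each nonempty fibre of the power map a coset of a kernel of order $\gcd(e(p-1),(p-1)p^{M-1})$. The paper leaves the gcd evaluation and the cyclicity implicit; you spell out the gcd computation and derive cyclicity from \eqref{eq:local-units}.
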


\begin{proof}
For odd $p$, the group $(\ZZ/p^M\ZZ)^\times$ is cyclic of order $(p-1)p^{M-1}$. Every nonempty fibre of the homomorphism $x\mapsto x^D$ is a coset of its kernel, whose order is the displayed greatest common divisor above.
\end{proof}

\subsection{Coherent $\ZZ$-cones}

Let $F_{\RR}=F\otimes_\QQ\RR$. A \emph{$\ZZ$-cone} is a set
\[
 \ZZ_{+}\lrangle{V}=\ZZ_{>0}\langle v_1,\ldots,v_d\rangle \subseteq F_{\RR}^{\times},
 \qquad d=[F:\QQ],
\]
where $v_1,\ldots,v_d$ form a $\QQ$-basis $V$ of $F$. A $\ZZ$-cone $\calB$ is said to be \emph{coherent} if, for every embedding $\tau$, one may choose $\rho_\tau\in\{\operatorname{Re}\tau,\operatorname{Im}\tau\}$ such that all $\rho_\tau(v_j)$ are nonzero and have the same sign.

Let $C_{F}^{0}$ be a fundamental domain of $F_{\RR}^{\times}/\calO_{F}^{\times}$, where $\calO_{F}^{\times}$ acts on $F_{\RR}^{\times}$ via the natural embedding $F^{\times} \hookrightarrow F_{\RR}^{\times}$. We denote by $\frB$ a finite set of coherent $\ZZ$-cones which covers $\calO_{F} \cap C_{F}^{0}$. The existence of such $\frB$ is guaranteed by \cite[Proposition 4]{Rohrlich89}. We fix such data once and for all.

If $\alpha=\sum_jn_jv_j$ belongs to a fixed coherent cone $\calB$, then for every $\tau$, we have
\begin{equation}\label{eq:coherent-coordinate}
\abs{\tau(\alpha)} \geq\abs{\rho_\tau(\alpha)} =\sum_jn_j\abs{\rho_\tau(v_j)}\gg_{\calB,\tau}\max_jn_j.
\end{equation}
The reverse inequality $\abs{\tau(\alpha)}\ll_{\calB,\tau}\max_jn_j$ is immediate:
\[
\abs{\tau(\alpha)} \leq \sum_j n_j \abs{\tau(v_j)} \leq \max_jn_j \sum_j \abs{\tau(v_j)}
\ll_{\calB,\tau} \max_jn_j.
\]
Thus on a fixed coherent cone $\calB$, we have $\abs{\tau(\alpha)} \asymp_{\calB,\tau} \max_jn_j$. Additionally, it follows from the definition that
\[
\abs{N_{F/\QQ}(\alpha)} \gg_{\calB, \tau} \left( \max_{j} n_{j} \right)^{d}.
\]
Since $\frB$ is a finite set, it follows that for $\alpha \in \calO_{F} \cap C_{F}^{0}$, we have
\begin{equation}\label{eq:balanced-domain}
 \max_{\tau:F\hookrightarrow\CC}\abs{\tau(\alpha)}
 \ll_F\abs{N_{F/\QQ}(\alpha)}^{1/d}.
\end{equation}

We are now ready to prove the following estimate. It is a translated version of the estimate from Rohrlich's \cite[Proposition~5]{Rohrlich89} that appears on \cite[page 131]{KwonCorrected}.

\begin{proposition} \label{prop:cone-progression}
Let $\frc$ be a fixed integral ideal prime to $\frp$. For $M\geq1$, let $\calR = r + \frp^{M} \frc$ be a coset of $\frp^M\frc$ in $\frc$. Uniformly in $\calR$, $M$, and $X\geq1$, we have
\begin{equation}\label{eq:translated-progression}
 \#\{\alpha\in\calR\cap C_F^0:0<\abs{N_{F/\QQ}(\alpha)}\leq X\}
 \ll_{F,\frc}1+\frac{X}{p^M}.
\end{equation}
\end{proposition}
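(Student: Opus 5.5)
The plan is a packing argument in the Minkowski space $F_{\RR}$. It uses only the bound \eqref{eq:balanced-domain} and the fact that the lattice $\frp^M\frc$ is an ideal, so every nonzero element has norm at least $\Norm(\frp^M\frc)$. We equip $F_{\RR}\cong\RR^{r_1}\times\CC^{r_2}$ with the sup-norm $\lVert\beta\rVert:=\max_{\tau}\abs{\tau(\beta)}$, which is equivalent to a Euclidean norm up to constants depending only on $F$.

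\emph{Step 1 (confining the points to a ball).} Let $\alpha\in\calR\cap C_F^0$ with $0<\abs{N_{F/\QQ}(\alpha)}\leq X$. Since $\calR\subseteq\frc\subseteq\calO_F$, we have $\alpha\in\calO_F\cap C_F^0$, so \eqref{eq:balanced-domain} gives
\[
\lVert\alpha\rVert\ll_F X^{1/d}=:R.
\]
Thus every point to be counted lies in a ball $B(0,C_FR)$ with $C_F$ depending only on $F$.

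\emph{Step 2 (lower bound for the minimum of the lattice $L:=\frp^M\frc$).} For $0\neq\beta\in L$, the principal ideal $(\beta)$ is contained in $L$, so $\abs{N_{F/\QQ}(\beta)}=\Norm((\beta))\geq\Norm(L)$. Because $\frp$ is totally split, $\Norm\frp=p$, hence $\Norm(L)=p^M\Norm\frc$. On the other hand $\abs{N_{F/\QQ}(\beta)}=\prod_\tau\abs{\tau(\beta)}\leq\lVert\beta\rVert^d$. Combining the two,
\[
\lambda_1:=\min_{0\neq\beta\in L}\lVert\beta\rVert\geq (p^M\Norm\frc)^{1/d}\geq p^{M/d}.
\]

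\emph{Step 3 (packing).} Any two distinct counted points $\alpha,\alpha'\in\calR=r+L$ satisfy $0\neq\alpha-\alpha'\in L$, so $\lVert\alpha-\alpha'\rVert\geq\lambda_1$. Hence the sup-norm balls of radius $\lambda_1/2$ centred at the counted points are pairwise disjoint. They are all contained in $B(0,C_FR+\lambda_1/2)$. Comparing volumes in $F_{\RR}\cong\RR^d$ bounds the count by
\[
\ll_F\Bigl(\frac{C_FR+\lambda_1/2}{\lambda_1/2}\Bigr)^{d}\ll_F\Bigl(1+\frac{R}{\lambda_1}\Bigr)^{d}\ll_F 1+\frac{R^d}{\lambda_1^d}\leq 1+\frac{X}{p^M}.
\]
This is \eqref{eq:translated-progression}. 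The implied constant depends only on $F$ through \eqref{eq:balanced-domain} and the fixed cone data $\frB$, and in fact only favourably on $\frc$. It is uniform in $r$, $M$ and $X$.

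The step I expect to need the most care is Step 2 together with its interaction with Step 3. A general sublattice of index $p^M$ can be very skewed: for such a lattice, a box of side $R$ may contain about $R$ points of a coset rather than $1+R^d/p^M$. What rules this out here is precisely the multiplicative (ideal) structure of $\frp^M\frc$, which forces every nonzero vector to have length at least $\Norm(L)^{1/d}$. Everything else is routine. If one prefers to stay closer to Rohrlich's formulation via coherent cones, one can instead write $\alpha=\sum_j n_jv_j$ with $\max_j n_j\ll X^{1/d}$ and run the same packing argument inside the lattice $\bigoplus_j\ZZ v_j$, which is commensurable with $\calO_F$ up to a bounded index depending only on $\frB$.
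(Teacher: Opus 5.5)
Your proposal is correct and follows essentially the same route as the paper: you confine the counted points to a ball of radius $\ll_F X^{1/d}$ via \eqref{eq:balanced-domain}, use $\abs{N_{F/\QQ}(\beta)}\geq p^M\Norm\frc$ for $0\neq\beta\in\frp^M\frc$ to get pairwise separation $\gg p^{M/d}$, and compare volumes of disjoint balls. The differences are cosmetic: you use the sup-norm rather than a Euclidean norm and radius $\lambda_1/2$ rather than $\delta_M/3$, and you correctly observe that the implied constant is in fact independent of $\frc$.
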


\begin{proof}
Embed $F$ into its $d$-dimensional $\RR$-vector space $F_{\RR}$ via $\iota: F \hookrightarrow F_{\RR}$. For every point $\alpha$ counted on the left of \eqref{eq:translated-progression}, its Euclidean norm $\aabs{\iota(\alpha)}$ satisfies $\aabs{\iota(\alpha)} \ll_{d} \max_{\tau}{\abs{\tau(\alpha)}}$. By \eqref{eq:balanced-domain}, $\alpha$ lies in a Euclidean ball $\sfB(0, C_{F} X^{1/d})$ where $C_{F}$ is a constant depending only on $F$.

Take distinct $\alpha_1, \alpha_2$ in the counted set and set $0 \ne x := \alpha_1 - \alpha_2 \in\frp^M\frc$. Then 
\[
 \abs{N_{F/\QQ}(x)}\geq\Norm(\frp^M \frc) = p^{M} \Norm(\frc) \gg_{F,\frc}p^M.
\]
To bound its Euclidean norm, note that $\aabs{\iota(x)} \gg_{F} \abs{N(x)}^{1/d}$; this implies
\[
\aabs{\iota(\alpha_1) - \iota(\alpha_2)} = \aabs{\iota(x)} \gg_{F} \abs{N(x)}^{1/d} \gg_{F,\frc} p^{M/d}
\]
We may write $\aabs{\iota(\alpha_1) - \iota(\alpha_2)} \geq c_{F,\frc} p^{M/d}$ and set $\delta_{M} := c_{F,\frc} p^{M/d}$ for simplicity.

Around every point $\iota(\alpha)$, we place an open Euclidean ball of radius $\delta_{M}/3$; these balls are pairwise disjoint. By the first paragraph of the proof, all centers $\iota(\alpha)$ lie in the Euclidean ball $\sfB(0,C_{F} X^{1/d})$. So all the smaller balls lie inside the ball $\sfB(0, C_{F}X^{1/d}+\delta_M/3)$. Comparing the $d$-dimensional volumes gives 
\[
A \vol(\sfB(0,\delta_{M}/3)) \leq \vol(\sfB(0,C_{F}X^{1/d}+\delta_{M}/3)),
\]
where $A := \#\{\alpha\in\calR\cap C_F^0:0<\abs{N_{F/\QQ}(\alpha)}\leq X\}$ for simplicity. Since the volume of a $d$-dimensional Euclidean ball of radius $R$ is $\ll_{d} R^{d}$, we have 
\begin{align*}
\#\{\alpha\in\calR\cap C_F^0:0<\abs{N_{F/\QQ}(\alpha)}\leq X\} &\ll_{d} \left(1+ 3 \dfrac{C_{F} X^{1/d}}{\delta_M}\right)^{d} \\ 
&\ll_{F, \frc, d} \left(1 + \left(\dfrac{X}{p^M}\right)^{1/d}\right)^{d} \\
&\ll_{F, \frc, d} 1 + \dfrac{X}{p^{M}},    
\end{align*}
where in the last equality, we use that $(1+t)^d\ll_d1+t^d$ for $t \geq 0$. The proposition is thus proved.
\end{proof}

We also record the following result appearing as \cite[Lemma 3.7]{KwonCorrected}.

\begin{lemma}\label{lem:polynomial-gap}
Fix an integral ideal $\frb$, an element $\gamma\in F$, and an integer $m\geq1$. If $\alpha\in\calO_F\cap C_F^0$ and $0\ne\alpha^m-\gamma\in\frb$, then
\begin{equation}\label{eq:polynomial-gap}
 \abs{N_{F/\QQ}(\alpha)}\gg_{F,\gamma,m}(\Norm\frb)^{1/m}.
\end{equation}
\end{lemma}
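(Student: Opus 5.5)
The plan is to turn the algebraic information $0\ne x:=\alpha^m-\gamma\in\frb$ into a lower bound for $\abs{N_{F/\QQ}(x)}$. That bound will be compared with an upper bound for $\abs{N_{F/\QQ}(x)}$ in terms of the size of $\alpha$. The size of $\alpha$ is controlled by $\abs{N_{F/\QQ}(\alpha)}$ because $\alpha$ lies in the balanced fundamental domain $C_F^0$.

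The first difficulty is that $\gamma\in F$ need not be integral. Fix a positive integer $D=D_\gamma$ with $D\gamma\in\calO_F$. Then $Dx=D\alpha^m-D\gamma$ is a nonzero element of $\calO_F$ lying in $D\frb$. Hence $(Dx)\subseteq D\frb$, and
\[
 \abs{N_{F/\QQ}(Dx)}\geq \Norm(D\frb)=D^{d}\Norm\frb .
\]
Dividing by $D^d$ gives $\abs{N_{F/\QQ}(x)}\geq \Norm\frb$. Here I use that $x\in\frb$ holds in the sense that $x$ is an element of the fractional ideal $\frb$, so $(x)\subseteq\frb$ and $\abs{N(x)}\ge \Norm\frb$ whenever $x\ne0$.

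For the upper bound, write $A:=\max_\tau\abs{\tau(\alpha)}$. By \eqref{eq:balanced-domain}, $A\ll_F\abs{N_{F/\QQ}(\alpha)}^{1/d}$. For each embedding $\tau$,
\[
\abs{\tau(x)}\leq A^m+\abs{\tau(\gamma)}\ll_{\gamma,m}\max(A,1)^m .
\]
Taking the product over all $d$ embeddings gives
\[
\Norm\frb\leq\abs{N_{F/\QQ}(x)}\ll_{F,\gamma,m}\max(A,1)^{md}\ll_{F,\gamma,m}\max\bigl(\abs{N_{F/\QQ}(\alpha)},1\bigr)^{m}.
\]
Since $\alpha\in\calO_F$ is nonzero (its nonvanishing is implicit in $\alpha\in C_F^0\subseteq F_\RR^\times$), we have $\abs{N_{F/\QQ}(\alpha)}\geq1$, so the maximum can be dropped. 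Taking $m$-th roots yields \eqref{eq:polynomial-gap}.

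The main obstacle is only bookkeeping. One must make sure the implied constant depends on $F,\gamma,m$ and not on $\frb$ or $\alpha$. One must also handle the possibly non-integral $\gamma$, either by clearing denominators as above or by interpreting the hypothesis so that $\frb$ is a fractional ideal. The genuinely analytic input, namely the balancedness \eqref{eq:balanced-domain} on $C_F^0$ coming from coherent $\ZZ$-cones, is already available.
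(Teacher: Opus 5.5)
Your proof is correct and follows the paper's structure: the ideal-containment lower bound $\abs{N_{F/\QQ}(\alpha^m-\gamma)}\geq\Norm\frb$ is compared with an upper bound $\ll_{F,\gamma,m}\abs{N_{F/\QQ}(\alpha)}^m$ coming from the coherent-cone geometry of $C_F^0$. The only difference is cosmetic: the paper uses the per-embedding lower bound $\abs{\tau(\alpha)}\gg 1$ from \eqref{eq:coherent-coordinate}, while you use the balanced bound \eqref{eq:balanced-domain} together with $\abs{N_{F/\QQ}(\alpha)}\geq 1$; your denominator-clearing for $\gamma$ is harmless but unnecessary, since $\alpha\in\calO_F$ and $\alpha^m-\gamma\in\frb\subseteq\calO_F$ already force $\gamma\in\calO_F$.
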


\begin{proof}
The ideal divisibility gives
\[
 \Norm\frb\leq\abs{N_{F/\QQ}(\alpha^m-\gamma)}.
\]
Choose one of the finitely many coherent cones $\calB$ containing $\alpha$. Equation \eqref{eq:coherent-coordinate} bounds every $\abs{\tau(\alpha)}$ below by a positive constant depending only on $\calB$. Therefore
\[
 \abs{N(\alpha^m-\gamma)} \leq\abs{N(\alpha)}^m
 \prod_{\tau\in I_F}\left(1+\frac{\abs{\tau(\gamma)}}{\abs{\tau(\alpha)}^m}\right) \ll_{F,\gamma,m}\abs{N(\alpha)}^m.
\]
Combining the two inequalities proves the lemma.
\end{proof}

We then have the following proposition as an analogue of \cite[Proposition 3.8]{KwonCorrected} in the case where $p$ may divide $h_{F}$.

\begin{proposition}\label{prop:norm-gap}
If $\fra$ is an integral ideal of $F$ such that $\fra\ne\calO_F$, $(\fra,\frp)=1$, and $\phi_{\mathrm{av}}(\fra)\ne0$, then
\begin{equation}\label{eq:norm-gap}
 \Norm\fra \gg_{F, p, f} q^{1/(e(p-1))}.
\end{equation}
\end{proposition}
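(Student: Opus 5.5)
Write $\fra=(\alpha)\frc_i^{-1}$ as in \eqref{eq:ideal-parametrization}. Multiplying $\alpha$ by a global unit does not change $\fra$, so we may take $\alpha\in\calO_F\cap C_F^0$; this needs $\frc_i$ integral, so $\alpha\in\frc_i\subseteq\calO_F$. Then $\Norm\fra=\abs{N_{F/\QQ}(\alpha)}/\Norm\frc_i$. Since there are only $h_F$ classes, it suffices to show $\abs{N_{F/\QQ}(\alpha)}\gg q^{1/(e(p-1))}$ separately for each $i$, with the implied constant allowed to depend on $i$.

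\textbf{Main case.} Proposition~\ref{prop:power-support} gives $\alpha^{e(p-1)}\equiv b_i^{p-1}\pmod{\frp^{n-c}}$. Since $\alpha$ is prime to $\frp$ and $\frp$ is totally split, we have $\calO_F/\frp^{n-c}\cong\ZZ/p^{n-c}\ZZ$. So $\alpha^{e(p-1)}-b_i^{p-1}\in\frp^{n-c}$. If this difference is nonzero, apply Lemma~\ref{lem:polynomial-gap} with $\frb=\frp^{n-c}$, $\gamma=b_i^{p-1}$ and $m=e(p-1)$. It gives
\[
\abs{N_{F/\QQ}(\alpha)}\gg_{F,b_i,e,p}p^{(n-c)/(e(p-1))}\gg_{F,p,f}q^{1/(e(p-1))},
\]
because $p^{-c/(e(p-1))}$ depends only on $p$ and $f$ through $c$. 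The choice of the $b_i$ depends only on $F$, so taking the worst constant over $i$ gives \eqref{eq:norm-gap}.

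\textbf{Degenerate case.} Suppose instead $\alpha^{e(p-1)}=b_i^{p-1}$ exactly. Then $\fra^{e(p-1)}=(\alpha^{e(p-1)}/b_i^{p-1})=(1)$, so $\fra^{e(p-1)}=\calO_F$. Since $\fra$ is integral, this forces $\fra=\calO_F$, which the hypothesis excludes.

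The degenerate case is harmless. The only point needing care is the implied constant: Lemma~\ref{lem:polynomial-gap} depends on $\gamma=b_i^{p-1}$ and on $m=e(p-1)$, but these come from finitely many fixed choices depending on $F$ and $p$, so the bound is uniform in $n$ and in $\phi$. Finally, we need $n-c\ge1$ so that the congruence modulo $\frp^{n-c}$ is meaningful; this holds because $n>2c$.
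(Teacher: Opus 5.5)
Your proof is correct and is essentially the paper's argument: both feed the congruence of Proposition~\ref{prop:power-support} into Lemma~\ref{lem:polynomial-gap}, and both exclude the degenerate case because $\fra$ would then be a torsion element of the ideal group. The only difference is that the paper applies the lemma to a unit-adjusted generator $\beta$ of $\fra^{e}$ (with $m=p-1$, $\gamma=1$ and $\abs{N_{F/\QQ}(\beta)}=(\Norm\fra)^{e}$); this avoids the dependence on the class data $\frc_i, b_i$, which you handle by taking the worst constant over the $h_F$ classes.
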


\begin{proof}
Choose $\beta\in\calO_F$ with $\fra^{e}=(\beta)$. Proposition~\ref{prop:power-support} gives $\beta^{p-1}\equiv1\pmod{\frp^{n-c}}$. Multiply $\beta$ by a global unit $u \in \calO_{F}^{\times}$ so that $u\beta \in C_{F}^{0}$. After replacing $\beta$ by $u\beta$, one checks directly that
\[
\beta \in \calO_{F} \cap C_{F}^{0}, \quad \fra^{e} = (\beta), \quad \beta^{p-1} \equiv 1 \pmod{\frp^{n-c}}.
\]

If $\beta^{p-1}=1$, then $(\beta)=\calO_F$, hence $\fra^{e}=\calO_F$. The group of fractional ideals is torsion-free, so $\fra=\calO_F$, contrary to the hypothesis. Thus
\[
 0\ne\beta^{p-1}-1\in\frp^{n-c}.
\]
Apply Lemma~\ref{lem:polynomial-gap} with $\alpha = \beta$, $m=p-1$, $\gamma=1$ and $\frb = \frp^{n-c}$. Since $\abs{N(\beta)}=(\Norm\fra)^{e}$, it gives
\[
 (\Norm\fra)^{e} \gg_{F,p} p^{(n-c)/(p-1)} \gg_{F, p, f} q^{1/(p-1)},
\]
which is \eqref{eq:norm-gap}.
\end{proof}

\subsection{Counting supported ideals}
For $X\geq1$, let
\[
 A_\phi(X)=\#\{0\ne\fra\subseteq\calO_F:\fra\ne\calO_F,\ (\fra,\frp)=1,\ \Norm\fra\leq X,\ \phi_{\mathrm{av}}(\fra)\ne0\}.
\]
The goal of this section is to estimate $A_{\phi}(X)$.

\begin{proposition}\label{prop:support-count}
Uniformly for primitive characters $\phi \in \Xi_{\frp}$ of conductor $\frp^n$, we have
\begin{equation}\label{eq:support-count}
 A_\phi(X)\ll_{F,p,f}1+\frac{X}{q}.
\end{equation}
Moreover, $A_\phi(X)=0$ when $X<C_{0} q^{1/(e(p-1))}$ for a constant $C_{0}$ depending only on $F$, $p$ and $f$.
\end{proposition}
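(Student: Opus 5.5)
The second assertion comes straight from Proposition~\ref{prop:norm-gap}: every $\fra$ counted by $A_\phi(X)$ satisfies $\Norm\fra\geq C_0 q^{1/(e(p-1))}$ for a constant $C_0=C_0(F,p,f)$, so the set is empty when $X<C_0q^{1/(e(p-1))}$. The plan for the first assertion is to split the counted ideals by ideal class, parametrize each class by principal generators, and count those generators with Proposition~\ref{prop:cone-progression}.

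\textbf{Step 1: parametrize by class.} Fix a class $\frc_i^{-1}$. Every $\fra$ in this class can be written $\fra=(\alpha)\frc_i^{-1}$ with $\alpha\in\frc_i$, as in \eqref{eq:ideal-parametrization}. Replacing $\alpha$ by a unit multiple, we may take $\alpha\in C_F^0$. Then $\abs{N_{F/\QQ}(\alpha)}=\Norm\fra\,\Norm\frc_i\leq X\Norm\frc_i$, and distinct $\fra$ give distinct $\alpha$, so the map $\fra\mapsto\alpha$ is injective.

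\textbf{Step 2: reduce to residue classes.} If $\phi_{\mathrm{av}}(\fra)\neq0$, Proposition~\ref{prop:power-support} gives $\alpha^{e(p-1)}\equiv b_i^{p-1}\pmod{\frp^{n-c}}$, where $\alpha$ is a unit at $\frp$. Since $\frp$ is totally split, $\calO_F/\frp^{n-c}\cong\ZZ/p^{n-c}\ZZ$. By Lemma~\ref{lem:power-fibres}, the image of $\alpha$ in $(\ZZ/p^{n-c}\ZZ)^\times$ therefore lies in a set of at most $C_{e,p}$ residues, and this bound does not depend on $n$.

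\textbf{Step 3: lift to cosets in $\frc_i$.} Because $\frc_i$ is prime to $\frp$, CRT gives $\frc_i/\frp^{n-c}\frc_i\cong\calO_F/\frp^{n-c}$. So the condition on $\alpha\bmod\frp^{n-c}$ confines $\alpha$ to at most $C_{e,p}$ cosets $r+\frp^{n-c}\frc_i$ of $\frp^{n-c}\frc_i$ inside $\frc_i$.

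\textbf{Step 4: count each coset.} Apply Proposition~\ref{prop:cone-progression} to each coset with $M=n-c$ and with $X\Norm\frc_i$ in place of $X$. This gives at most
\[
\ll_{F,\frc_i}1+\frac{X\Norm\frc_i}{p^{n-c}}\ll_{F,p,f}1+\frac{X}{q}
\]
elements per coset, using $p^c\ll_{p,f}1$ since $c$ depends only on $f$ and $p$.

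\textbf{Step 5: sum up.} Multiplying by at most $C_{e,p}$ cosets per class and summing over the $h_F$ classes gives $A_\phi(X)\ll_{F,p,f}1+X/q$. The representatives $\frc_i$ and $b_i$ are fixed once for $F$, so the implied constant is uniform in $\phi$.

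The main obstacle is Step 3: translating the fibre count modulo $p^{n-c}$ from Lemma~\ref{lem:power-fibres} into a bounded number of cosets of $\frp^{n-c}\frc_i$. This requires the total splitness of $\frp$ and the coprimality of $\frc_i$ with $\frp$. Everything after that is routine bookkeeping of constants.
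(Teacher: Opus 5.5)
Your proposal is correct and follows the paper's proof essentially step for step. Like the paper, you write each ideal class as $(\alpha)\frc_i^{-1}$ with $\alpha\in C_F^0$, use Proposition~\ref{prop:power-support} and Lemma~\ref{lem:power-fibres} to confine $\alpha$ to at most $C_{e,p}$ cosets of $\frp^{n-c}\frc_i$, count each coset with Proposition~\ref{prop:cone-progression}, sum over classes, and take the vanishing statement from Proposition~\ref{prop:norm-gap}. Your explicit identification $\frc_i/\frp^{n-c}\frc_i\cong\calO_F/\frp^{n-c}\cong\ZZ/p^{n-c}\ZZ$ only spells out a step the paper leaves implicit.
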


\begin{proof}
Write every ideal in the class represented by $\frc_i^{-1}$ as $\fra=(\alpha)\frc_i^{-1}$ with $\alpha\in\frc_i$. As in the proof of Proposition \ref{prop:norm-gap}, we may take $\alpha\in C_F^0$. By Proposition \ref{prop:power-support},
\[
\alpha^{e(p-1)}\equiv b_i^{p-1}\pmod{\frp^{n-c}}
\]
Then Lemma \ref{lem:power-fibres} shows that there are at most $C_{e,p}$ cosets $\calT_{i,1}, \ldots, \calT_{i, t_i}$ (with $t_i \leq C_{e,p}$) of $\frp^{n-c}\frc_{i}$ in $\frc_{i}$ such that 
\[
\phi_{\mathrm{av}}(\fra) \neq 0 \Rightarrow \alpha \in \bigcup_{j=1}^{t_{i}} \calT_{i, j}.
\]
We emphasize that the actual cosets may depend on $i$ and $n$, but their number is bounded by $C_{e,p} = (p-1)p^{v_{p}(e)}$, which is independent of $i$ and $n$.

Also, we have
\[
 \abs{N_{F/\QQ}(\alpha)}=(\Norm\fra)(\Norm\frc_i)\leq X\Norm\frc_i.
\]
Applying Proposition~\ref{prop:cone-progression} to each coset $\calT_{i,j}$ gives 
\[
\#\left\{ \alpha\in \calT_{i,j}\cap C_F^0: 0<\abs{N(\alpha)}\leq X\Norm\frc_i \right\} \ll_{F,\frc_i}
1+\frac{X\Norm\frc_i}{p^{n-c}}.
\]
For a fixed $i$, there are at most $C_{e,p}$ cosets $\calT_{i,j}$. There are $h_F$ ideal classes. Therefore,
\begin{align*}
A_\phi(X)
&\leq \sum_{i=1}^{h_F} \sum_{j=1}^{t_i}
\#\left\{ \alpha\in\calT_{i,j}\cap C_F^0: \abs{N(\alpha)}\leq X\Norm\frc_i \right\}\\
&\ll_{F} \sum_{i=1}^{h_F} t_i \left( 1+\frac{X\Norm\frc_i}{p^{n-c}} \right) \ll_{F} C_{e,p} \sum_{i=1}^{h_F} \left( 1+\frac{X\Norm\frc_i}{p^{n-c}}\right) \ll_{F, p, f} 1 + \dfrac{X}{q}.
\end{align*}
For the last estimate, the appearance of $f$ in the implied constant comes from the constant $c$ in the definition  $K_0\cap\QQ(\mu_{p^\infty}) = \QQ(\mu_{p^c})$ with $K_{0} = \QQ_{f}(\mu_p)$, and every quantity other than $X$ and $n$ is fixed. The vanishing below $C_{0} q^{1/(e(p-1))}$ follows from Proposition \ref{prop:norm-gap}.
\end{proof}

\subsection{Concluding the ordinary part estimation}

We start by the following partial summation formula.
\begin{lemma}
\label{lem:weighted}
Let $\calS$ be a countable set equipped with a function $H:\calS\rightarrow [R,\infty)$ with $R\geq1$, and define
\[
A_{\calS}(X)=\#\{x\in\calS:H(x)\leq X\}.
\]
Suppose that $A_{\calS}(X)\ll 1+\frac{X}{q}$. Let $0<s<1$ and $y>0$. If $\abs{V(x)}\ll_J(1+x)^{-J}$ for some $J>1-s$, then
\[
\sum_{x\in\calS}H(x)^{-s}\abs{V\left(\frac{H(x)}{y}\right)}
\ll R^{-s}+q^{-1}y^{1-s}.
\]
If instead $A_{\calS}(X)\ll X$, then
\[
\sum_{x\in\calS}H(x)^{-s}\abs{V\left(\frac{H(x)}{y}\right)}
\ll y^{1-s}.
\]
\end{lemma}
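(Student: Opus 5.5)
The plan is to use Abel (Stieltjes) partial summation against the counting function $A_{\calS}$, splitting the sum at the scale $y$. First I would note a bound for the weight. For $X\ge R$ put
\[
g(X)=X^{-s}\abs{V(X/y)}.
\]
Since $H(x)\ge R$ for every $x\in\calS$, the sum equals the Stieltjes integral $\int_{R^-}^\infty g(X)\,\dif A_{\calS}(X)$. The function $\abs{V}$ need not be monotone, so I will not differentiate $g$. Instead I will dominate it by the monotone majorant
\[
\tilde g(X)=X^{-s}(1+X/y)^{-J},
\]
which is decreasing in $X$. The sum is then at most $\int_{R^-}^\infty \tilde g\,\dif A_{\calS}$.

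Next I would integrate by parts. The decay $J>1-s$ together with $A_{\calS}(X)\ll 1+X/q$ gives $\tilde g(X)A_{\calS}(X)\to0$ as $X\to\infty$, because $X^{1-s-J}\to0$. Hence
\[
\int_{R^-}^\infty \tilde g\,\dif A_{\calS}=\int_R^\infty A_{\calS}(X)\,(-\tilde g'(X))\,\dif X .
\]
The boundary term at $R$ vanishes, since $A_{\calS}(R^-)=0$.

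Inserting $A_{\calS}\ll 1+X/q$ splits the right side into two pieces. The constant piece gives $\int_R^\infty(-\tilde g')=\tilde g(R)\le R^{-s}$. For the second piece, I would integrate by parts once more:
\[
\frac1q\int_R^\infty X(-\tilde g')\,\dif X\le \frac1q\Bigl(R\tilde g(R)+\int_R^\infty \tilde g(X)\,\dif X\Bigr).
\]
I then bound $\int_0^\infty X^{-s}(1+X/y)^{-J}\,\dif X$ by splitting at $y$. The part $X\le y$ contributes $\ll y^{1-s}$, using $s<1$. The part $X>y$ contributes $\ll y^{J}\int_y^\infty X^{-s-J}\,\dif X\ll y^{1-s}$, using $s+J>1$.

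The boundary term $q^{-1}R\tilde g(R)=q^{-1}R^{1-s}(1+R/y)^{-J}$ needs a check in two cases. If $R\le y$ it is at most $q^{-1}y^{1-s}$. If $R>y$ it is at most $q^{-1}R^{1-s}(y/R)^{J}\le q^{-1}y^{1-s}$, again using $J>1-s$. This proves the first bound. The second case, $A_{\calS}(X)\ll X$, is the same computation with the $1$ term absent and $q$ replaced by $1$; it gives $\ll y^{1-s}$, and the boundary term is handled identically.

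The only delicate point is the non-monotonicity of $V$, which the majorant $\tilde g$ resolves. Beyond that, all exponent conditions ($s<1$ and $J>1-s$) are used exactly where the integrals converge, so I expect no real obstacle.
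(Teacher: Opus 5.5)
Your proposal is correct and follows essentially the same route as the paper. Both arguments dominate by the decreasing majorant $X^{-s}(1+X/y)^{-J}$, apply partial summation against the counting function, integrate by parts again on the $X/q$ piece, and control the boundary term and tail integral using $s<1$ and $J>1-s$. The only cosmetic differences are in the last step: you split $\int\tilde g$ at $X=y$ and treat $R\le y$ and $R>y$ separately, whereas the paper substitutes $u=yv$ and writes the boundary term as $y^{1-s}z^{1-s}(1+z)^{-J}$ with $z=R/y$.
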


\begin{proof}
By the assumed decay property of $V$, it suffices to estimate the sum $\sum_{x\in\calS}g(H(x))$ for $g(u):=u^{-s}(1+u/y)^{-J}$. The function $g$ is positive and decreasing. For $T\geq R$, partial summation with $A_{\calS}(X)=\#\{x\in\calS:H(x)\leq X\}$ gives
\[
\sum_{\substack{x\in\calS\\H(x)\leq T}}g(H(x))
=A_{\calS}(T)g(T)+\int_R^T A_{\calS}(u)(-g^\prime(u))\,\dif u.
\]
Since $A_{\calS}(T)\ll 1+T/q$ and $g(T)\ll_y T^{-s-J}$, the condition $J>1-s$ implies that $A_{\calS}(T)g(T)$ converges to $0$ as $T\to\infty$. Hence
\[
\sum_{x\in\calS}g(H(x))
=\int_R^\infty A_{\calS}(u)(-g^\prime(u))\,\dif u.
\]
Using the assumed estimate on $A_{\calS}(X)$ and the fact that $-g^\prime(u)\geq0$, we obtain
\[
\sum_{x\in\calS}g(H(x))
\ll \int_R^\infty(-g^\prime(u))\,\dif u
+\frac{1}{q}\int_R^\infty u(-g^\prime(u))\,\dif u.
\]
The first integral equals $g(R)$, and $g(R)\leq R^{-s}$. For the second integral, integration by parts gives
\[
\int_R^\infty u(-g^\prime(u))\,\dif u
=Rg(R)+\int_R^\infty g(u)\,\dif u,
\]
because $ug(u)$ converges to $0$ as $u\to\infty$. Writing $z=R/y$, we have
\[
Rg(R)=y^{1-s}z^{1-s}(1+z)^{-J}\ll_{s,J}y^{1-s},
\]
since $J>1-s$. Moreover, the change of variables $u=yv$ gives
\[
\int_R^\infty g(u)\,\dif u =y^{1-s}\int_{R/y}^\infty v^{-s}(1+v)^{-J}\,\dif v \ll_{s,J}y^{1-s}.
\]
The last integral converges because $0<s<1$ and $J>1-s$. Combining the estimates above, we have
\[
\sum_{x\in\calS}H(x)^{-s}\abs{V\left(\frac{H(x)}{y}\right)}
\ll R^{-s}+q^{-1}y^{1-s}.
\]

If $A_{\calS}(X)\ll X$, the same partial-summation argument gives
\[
\sum_{x\in\calS}g(H(x)) \ll \int_R^\infty u(-g^\prime(u))\,\dif u \ll y^{1-s}.
\]
This proves the second assertion.
\end{proof}

Recall that we have defined the ordinary part of the averaged $L$-function in \eqref{eq:S1} as 
\[
    S_1(\phi;y) =\frac{1}{\Gamma(k/2,f)}\sum_{\fra} \frac{a_f(\fra)\calG_{\mathrm{av}}(\phi,\fra)}{(\Norm\fra)^{k/2}} V_{1,k/2}\!\left(\frac{\Norm\fra}{y}\right).
\]
We are now ready to estimate this term.

\begin{theorem}\label{thm:first-half}
For $y\geq1$, we have
\begin{equation*}\label{eq:first-half}
 S_1(\phi;y)=\frac{V_{1,k/2}(1/y)}{\Gamma(k/2,f)}
 +O_{F,f,p,K_0,\varepsilon}\!\left(q^{-\frac{1}{e(p-1)}s_0}+q^{-1}y^{1-s_0}\right).
\end{equation*}
\end{theorem}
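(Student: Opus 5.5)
By Lemma~\ref{lem:partial-gauss-identity}, we may replace $\calG_{\mathrm{av}}(\phi,\fra)$ by $\phi_{\mathrm{av}}(\fra)$ in \eqref{eq:S1}, with the convention that it vanishes when $\frp\mid\fra$. The sum then runs over integral ideals $\fra$ prime to $\frp$, and we isolate the term $\fra=\calO_F$. Since $a_f(\calO_F)=1$ (normalisation), $\Norm\calO_F=1$ and $\phi_{\mathrm{av}}(\calO_F)=1$ (Lemma~\ref{lem:trace}), this term contributes exactly $V_{1,k/2}(1/y)/\Gamma(k/2,f)$, which is the main term in the statement. What remains is to bound the tail
\[
\frac{1}{\Gamma(k/2,f)}\sum_{\substack{\fra\ne\calO_F\\ \phi_{\mathrm{av}}(\fra)\ne0}}\frac{a_f(\fra)\phi_{\mathrm{av}}(\fra)}{(\Norm\fra)^{k/2}}V_{1,k/2}\!\left(\frac{\Norm\fra}{y}\right).
\]

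We take absolute values and use $\abs{\phi_{\mathrm{av}}(\fra)}\le 1$ from Lemma~\ref{lem:trace}. The coefficient bound \eqref{eq:central-coefficient-bound} then gives $\abs{a_f(\fra)}(\Norm\fra)^{-k/2}\ll(\Norm\fra)^{-s_0}$. The tail is therefore bounded by
\[
\sum_{x\in\calS}H(x)^{-s_0}\Bigl\lvert V_{1,k/2}\bigl(H(x)/y\bigr)\Bigr\rvert ,
\]
where $\calS$ is the set of integral ideals $\fra\ne\calO_F$ prime to $\frp$ with $\phi_{\mathrm{av}}(\fra)\ne0$, and $H(\fra)=\Norm\fra$. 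The counting function of $\calS$ is exactly $A_\phi(X)$. By Proposition~\ref{prop:support-count} we have $A_\phi(X)\ll 1+X/q$, and $A_\phi(X)=0$ for $X<C_0q^{1/(e(p-1))}$. Hence $H$ takes values in $[R,\infty)$ with $R=\max\{1,C_0q^{1/(e(p-1))}\}$; the cutoff at $1$ is harmless because any nontrivial integral ideal already has norm at least $2$.

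Next we apply the first assertion of Lemma~\ref{lem:weighted} with $s=s_0\in(1/4,1/2)$ and $V=V_{1,k/2}$. The required decay hypothesis comes from \eqref{eq:kernel-bounds}: we may choose any $J>1-s_0$, say $J=1$. The lemma yields the bound
\[
R^{-s_0}+q^{-1}y^{1-s_0}\ll q^{-\frac{1}{e(p-1)}s_0}+q^{-1}y^{1-s_0}.
\]
Dividing by the nonzero constant $\Gamma(k/2,f)$ gives the error term in the statement. The implied constants depend on $F$, on $p$, on $f$ (through $c$ and $K_0$) and on $\varepsilon$ (through $s_0$ and the coefficient bound), as claimed.

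The only step that needs care is the lower cutoff $R$. In Lemma~\ref{lem:weighted} the constant $R$ enters only through $g(R)\le R^{-s}$, where $g(u)=u^{-s}(1+u/y)^{-J}$ as in its proof; and $R$ need only be a lower bound for $H$ on $\calS$, so taking $R=\max\{1,C_0q^{1/(e(p-1))}\}$ is legitimate and the constant $C_0$ is absorbed into the $O$-term. Uniformity in $\phi$ and in $y\ge1$ is inherited directly from Proposition~\ref{prop:support-count} and Lemma~\ref{lem:weighted}. The argument therefore reduces to bookkeeping on top of the bounded-fibre count, which was the genuinely new input.
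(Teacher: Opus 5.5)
Your proposal is correct and follows the paper's proof essentially step for step: you replace $\calG_{\mathrm{av}}$ by $\phi_{\mathrm{av}}$ via Lemma~\ref{lem:partial-gauss-identity}, isolate the unit ideal as the main term, and bound the tail with the first part of Lemma~\ref{lem:weighted}, using $R=\max\{1,C_0q^{1/(e(p-1))}\}$ and the count $1+X/q$ from Propositions~\ref{prop:norm-gap} and~\ref{prop:support-count}. Your explicit choice $J=1$ is legitimate because $1-s_0<3/4$.
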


\begin{proof}
By Lemma~\ref{lem:partial-gauss-identity}, $\calG_{\mathrm{av}}(\phi,\fra)=\phi_{\mathrm{av}}(\fra)$, and hence
\[
 S_1(\phi;y) = \frac{1}{\Gamma(k/2,f)}\sum_{\fra} \frac{a_f(\fra)\phi_{\mathrm{av}}(\fra)}{(\Norm\fra)^{k/2}} V_{1,k/2}\!\left(\frac{\Norm\fra}{y}\right).
\]
The contribution of the unit ideal is $V_{1,k/2}(1/y)/\Gamma(k/2,f)$ because $a_f(\calO_F)=1$, $\phi_{\mathrm{av}}(\calO_F)=1$, and $\Norm\calO_F=1$. Hence
\[
 S_1(\phi;y) = \frac{V_{1,k/2}(1/y)}{\Gamma(k/2,f)} +E_1(\phi;y),
\]
where
\[
E_1(\phi;y)=\frac{1}{\Gamma(k/2,f)}
\sum_{\fra\neq\calO_F}
\frac{a_f(\fra)\phi_{\mathrm{av}}(\fra)}{(\Norm\fra)^{k/2}}
V_{1,k/2}\!\left(\frac{\Norm\fra}{y}\right).
\]

Recall that $s_0=1/2-t$, where $t=\theta+\varepsilon<1/4$. In particular, $0<s_0<1$. By \eqref{eq:central-coefficient-bound},
\[
 \frac{\abs{a_f(\fra)}}{(\Norm\fra)^{k/2}}
 \ll_{F,f,\varepsilon}(\Norm\fra)^{-s_0}.
\]
Moreover, $\abs{\phi_{\mathrm{av}}(\fra)}\leq1$, and terms with $\phi_{\mathrm{av}}(\fra)=0$ make no contribution. Since $\Gamma(k/2,f)$ is fixed, it follows that
\[
\abs{E_1(\phi;y)}\ll_{F,f,\varepsilon}
\sum_{\substack{\fra\neq\calO_F\\\phi_{\mathrm{av}}(\fra)\neq0}}
(\Norm\fra)^{-s_0}\abs{V_{1,k/2}\!\left(\frac{\Norm\fra}{y}\right)}.
\]
We apply Lemma~\ref{lem:weighted} to $\calS_\phi = \{\fra\neq\calO_F:\phi_{\mathrm{av}}(\fra)\neq0\}$ and $ H(\fra)=\Norm\fra$: Proposition~\ref{prop:support-count} gives
\[
 \#\{\fra\in\calS_\phi:\Norm\fra\leq X\}
 \ll_{F,p,f}1+\frac{X}{q},
\]
while Proposition~\ref{prop:norm-gap} gives, for $\fra \in \calS_{\phi}$, $H(\fra)=\Norm\fra\geq C_0q^{1/(e(p-1))}$. Since $H(\fra)\geq1$ as well, set $R:=\max\{1,C_0q^{\frac{1}{e(p-1)}}\}$. Then $H(\fra)\geq R$. Moreover, \eqref{eq:kernel-bounds} gives
\[
 \abs{V_{1,k/2}(x)}\ll_J(1+x)^{-J}
\]
for every $J>0$. Choose $J>1-s_0$. Applying Lemma~\ref{lem:weighted} with this choice of $R$, and using $R^{-s_0}\leq C_0^{-s_0}q^{-\frac{s_0}{e(p-1)}}$ yields
\[
\sum_{\substack{\fra\neq\calO_F\\\phi_{\mathrm{av}}(\fra)\neq0}}
(\Norm\fra)^{-s_0}\abs{V_{1,k/2}\!\left(\frac{\Norm\fra}{y}\right)}
\ll_{F,p,f,\varepsilon}q^{-\frac{1}{e(p-1)} s_0}+q^{-1}y^{1-s_0}.
\]
Consequently,
\[
E_1(\phi;y)\ll_{F,f,p,K_0,\varepsilon}
q^{-\frac{1}{e(p-1)} s_0}+q^{-1}y^{1-s_0},
\]
which proves the theorem.
\end{proof}
\section{Estimate of the Fricke part}
\label{sec:estimate-fricke}

In this section, we estimate the Fricke part of the Galois-averaged $L$-functions.

\subsection{The Kloosterman sum estimate}

For $z\in(\ZZ/q\ZZ)^\times$, we define the Kloosterman sum as 
\begin{equation}\label{eq:restricted-kloosterman}
 \Kl_c(z;q)=\frac{1}{q}\sum_{\substack{x\bmod q\\x\equiv c_\phi\, \bmod p^c}}
 \rme_q(x+x^{-1}z).
\end{equation}

The following estimate appears as \cite[Lemma 3.5]{KwonCorrected}.
\begin{lemma}\label{lem:restricted-kloosterman}
If $p$ is odd and $\lceil n/2\rceil\geq c$, then $\abs{\Kl_c(z;p^n)}\leq p^{2-n/2}$.
\end{lemma}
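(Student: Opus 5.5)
The plan is to remove the congruence condition $x\equiv c_\phi \pmod{p^c}$ by additive orthogonality, which turns $\Kl_c(z;q)$ into an average of complete Kloosterman sums modulo $q=p^n$. I would then apply the classical square-root bound to each of them. Here $z\in(\ZZ/q\ZZ)^\times$, $p$ is odd, and $x$ runs over units modulo $q$.

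\emph{Step 1: degenerate case.} If $n\le c$, the congruence $x\equiv c_\phi \pmod{p^c}$ fixes $x \bmod q$ uniquely. Then $\abs{\Kl_c(z;q)}\le q^{-1}=p^{-n}\le p^{2-n/2}$ trivially. So I will assume $n>c$. Then, for every $h$, the integer $1+hp^{n-c}$ is a unit modulo $q$.

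\emph{Step 2: detecting the congruence.} I will use
\[
\mathbf 1_{x\equiv c_\phi\,(p^c)}=\frac{1}{p^c}\sum_{h\bmod p^c}\rme_q\bigl(hp^{n-c}(x-c_\phi)\bigr).
\]
Inserting this into the definition of $\Kl_c$ gives
\[
\Kl_c(z;q)=\frac{1}{p^c q}\sum_{h\bmod p^c}\rme_q(-hp^{n-c}c_\phi)\,K(1+hp^{n-c},z;q),
\]
where $K(a,b;q)=\sum_{x\in(\ZZ/q\ZZ)^\times}\rme_q(ax+bx^{-1})$ is the complete Kloosterman sum. By the triangle inequality,
\[
\abs{\Kl_c(z;q)}\le q^{-1}\max_{h}\abs{K(1+hp^{n-c},z;q)}.
\]

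\emph{Step 3: square-root bound for complete sums.} For $a,b$ units modulo $p^n$ with $p$ odd, I will use $\abs{K(a,b;p^n)}\le 2p^{n/2}$.
\begin{itemize}
\item For $n=1$ this is Weil's bound.
\item For $n\ge2$ it follows from the elementary $p$-adic stationary phase (Salié's evaluation). Write $x=x_0(1+p^{\lceil n/2\rceil}t)$. Summing over $t$ forces $ax_0^2\equiv b$ to a suitable power of $p$. There are at most $2$ square roots of the unit $b/a$, and each contributes a term of modulus at most $p^{n/2}$; for odd $n$ this uses a quadratic Gauss sum modulo $p$ to handle the half-integral exponent.
\end{itemize}
This gives $\abs{\Kl_c(z;q)}\le 2p^{-n/2}\le p^{2-n/2}$, which is stronger than needed. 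The hypothesis $\lceil n/2\rceil\ge c$ is not essential for this route; it only reflects the direct approach of Kwon.

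\emph{Main obstacle.} The only nontrivial input is the bound $\abs{K(a,b;p^n)}\le 2p^{n/2}$ for $n\ge2$, in particular the odd-$n$ case of the stationary-phase computation. I would present it by citing the classical explicit evaluation of Kloosterman sums modulo prime powers (Salié; see also Iwaniec--Kowalski), rather than redoing the computation. If a self-contained argument is preferred, I would instead split $x=u(1+p^{\lceil n/2\rceil}t)$ directly inside the restricted sum. This is legitimate because $\lceil n/2\rceil\ge c$ ensures that the restriction $x\equiv c_\phi \pmod{p^c}$ depends only on $u$. I would then bound the number of $u \bmod p^{\lceil n/2\rceil}$ with $u^2\equiv z \pmod{p^{\lfloor n/2\rfloor}}$ by $2p$, which yields the stated bound $p^{2-n/2}$ with room to spare.
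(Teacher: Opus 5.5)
Your proof is correct. It takes a different route from the paper, whose proof is only a citation: it observes that $\Kl_c(z;p^n)$ is the complex conjugate of the sum in Kwon's Lemma~3.5, with $(n_0,c,d)$ renamed to $(c,c_\phi,z)$, and imports the bound from there.

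The hypothesis $\lceil n/2\rceil\ge c$ in that lemma is what a direct stationary-phase treatment of the restricted sum needs. That treatment is your fallback: write $x=u(1+p^{\lceil n/2\rceil}t)$, so that the congruence $x\equiv c_\phi \pmod{p^c}$ constrains only $u$, then count the $u$ with $u^2\equiv z \pmod{p^{\lfloor n/2\rfloor}}$. Carried out, this gives $\abs{\Kl_c(z;q)}\le 2p^{\lceil n/2\rceil-n}\le p^{2-n/2}$, so it also works.

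Your main route instead completes the sum. By additive orthogonality on $\ZZ/p^c\ZZ$, $\Kl_c(z;q)$ equals $q^{-1}$ times an average, with unimodular weights, of the $p^c$ complete sums $K(1+hp^{n-c},z;q)$. For $n>c$ both arguments of each of these sums are units. Hence the square-root bound $\abs{K(a,b;p^n)}\le 2p^{n/2}$ transfers without loss.

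What this buys you:
\begin{enumerate}
\item the hypothesis $\lceil n/2\rceil\ge c$ becomes unnecessary;
\item the constant improves from $p^2$ to $2$, which makes it transparent that the restriction to a residue class modulo $p^c$ costs nothing.
\end{enumerate}
What it costs is importing Sali\'e's explicit evaluation of Kloosterman sums modulo $p^n$ as a black box, whereas the stationary-phase argument is fully elementary. The Weil bound is needed only for $n=1$, which never occurs in the paper since there $n>2c\ge 2$. Neither difference affects the paper, because the constant is absorbed into the implied constant in Proposition~\ref{prop:root-average}.
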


\begin{proof}
Under the fixed identifications $F_{\frp}=\QQ_p$, $\calO_{F,\frp}=\ZZ_p$ and $\varpi_{\frp}=p$, the sum $\Kl_c(z;p^n)$ is the complex conjugate of the sum in \cite[Lemma~3.5]{KwonCorrected}, with Kwon's $(n_0,c,d)$ replaced respectively by $(c,c_\phi,z)$. The asserted estimate therefore follows directly from that lemma.
\end{proof}

\subsection{Averaged Gauss sum} \label{sec:translated-local-support}

We prove the following proposition as an analogue of \cite[Proposition 3.6]{KwonCorrected}, allowing the general case when $p$ may divide $h_{F}$.

\begin{proposition}\label{prop:root-average}
If $n>2c$, then for every integral ideal $\fra$,
\begin{equation}\label{eq:root-average}
 \abs{\calG_{\mathrm{av}}^{\iota}(\phi,\fra)}
 \leq(p-1)p^{c+2-n/2}.
\end{equation}
In particular, $\calG_{\mathrm{av}}^{\iota}(\phi,\fra)\ll_{p,f}q^{-1/2}$.
\end{proposition}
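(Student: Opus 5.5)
Assume $(\fra,\frp)=1$; otherwise $\calG^{\iota}_{\mathrm{av}}(\phi,\fra)=0$ by convention. The plan is to expand $\calP_\sigma(\phi)\calG_0(\phi^\sigma)W(\phi^\sigma)$ into a product of character sums, and to collect all dependence on $\sigma$ into a single averaged character value. Then average over $G_\phi$ with Lemma~\ref{lem:trace}, and finish with the Kloosterman bound of Lemma~\ref{lem:restricted-kloosterman}.

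\textbf{Step 1: the root number.} By \eqref{eq:gauss-bridge}, $\calG(\bar\phi^\sigma)=\phi^\sigma(\frd_F)\calG_0(\bar\phi^\sigma)$, so
\[
W(\phi^\sigma)=q^{-1}\calG_0(\bar\phi^\sigma)^2\,\phi^\sigma(\frd_F^2\varpi_{\frp^{2n}\frN}).
\]
The idele class $\varpi_{\frp^{2n}\frN}$ is not prime to $\frp$. However, it is a fixed idele class, and $\phi^\sigma$ evaluated at it is $\sigma$ applied to the value $\xi_0:=\phi(\frd_F^2\varpi_{\frp^{2n}\frN})$. I will keep this multiplier as an idele class $\mathbf m$, so that $\phi^\sigma(\mathbf m)=\sigma(\xi_0)$. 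No class-number manipulation is needed.

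\textbf{Step 2: expand the sums.} Using $\calG_0(\phi)\calG_0(\bar\phi)=\phi(-1)q=q$, write
\[
\calG_0(\phi^\sigma)\calG_0(\bar\phi^\sigma)^2=q\,\calG_0(\bar\phi^\sigma)=q\sum_{b}\bar\phi^\sigma(b)\rme_q(b).
\]
Combined with $\calP_\sigma(\phi)=\sum_{x\equiv c_\phi}\bar\phi^\sigma(x)\rme_q(x)$, this gives
\[
\calG^{\iota}_{\mathrm{av}}(\phi,\fra)=\frac{1}{q}\sum_{x\equiv c_\phi\,(p^c)}\sum_{b}\rme_q(x+b)\,\frac{1}{\abs{G_\phi}}\sum_\sigma\phi^\sigma\bigl(\mathbf m\,\fra^{-1}\iota_{\frp}(xb)^{-1}\bigr).
\]
Here I use the convention $\phi(a)=\phi_{\frp}(a)$ for local residues. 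Now substitute $b=z x^{-1}$. The inner average becomes $\phi_{\mathrm{av}}$ of the idele class $\mathbf m\fra^{-1}\iota_{\frp}(z)^{-1}$, which depends only on $z$. Therefore
\[
\calG^{\iota}_{\mathrm{av}}(\phi,\fra)=\sum_{z\in(\ZZ/q\ZZ)^\times}\Kl_c(z;q)\,\phi_{\mathrm{av}}\bigl(\mathbf m\fra^{-1}\iota_{\frp}(z)^{-1}\bigr).
\]
The bookkeeping of signs and conjugations in this step has to be checked carefully, and $\Kl_c$ may appear in a conjugated or sign-flipped form. This does not matter, because only absolute values are used afterwards.

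\textbf{Step 3: translated local support.} This is the key point replacing Kwon's use of $p\nmid h_F$. Lemma~\ref{lem:trace} applies to any idele class, since only the value $\phi(\cdot)\in\mu_{p^r}$ matters. It shows that the summand is nonzero only when
\[
\overline\phi_{\frp}(z)\in\xi^{-1}\mu_{p^c},\qquad \xi:=\phi(\mathbf m\fra^{-1}),
\]
and in that case the summand has absolute value at most $1$. The set $\xi^{-1}\mu_{p^c}$ has at most $p^c$ elements. By Lemma~\ref{lem:faithful}, $\phi_{\frp}$ on $(\ZZ/q\ZZ)^\times$ is trivial on $\mu_{p-1}$ and injective on $U_1/U_n$. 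Hence each value has at most $p-1$ preimages $z \bmod q$. So at most $(p-1)p^c$ values of $z$ contribute.

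\textbf{Step 4: conclusion.} The hypothesis $n>2c$ gives $\lceil n/2\rceil\ge c$. Lemma~\ref{lem:restricted-kloosterman} then bounds each contributing term by $p^{2-n/2}$. Summing over the at most $(p-1)p^c$ contributing $z$ yields \eqref{eq:root-average}. The final assertion $\calG^{\iota}_{\mathrm{av}}(\phi,\fra)\ll_{p,f}q^{-1/2}$ follows because $c$ depends only on $f$ and $p$.

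I expect the main obstacle to be Step 2. There one must verify that the $\sigma$-dependence, including that coming from $W(\phi^\sigma)$, collapses exactly into one $\phi^\sigma$ evaluated at a single idele class. In particular, the local component of $\varpi_{\frp^{2n}}$ must be absorbed consistently into the multiplier $\mathbf m$ rather than into $z$.
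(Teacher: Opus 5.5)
Your proposal is correct and follows essentially the same route as the paper: you rewrite $\calG_0(\phi^\sigma)W(\phi^\sigma)$ as $\calG_0(\bar\phi^\sigma)$ times $\phi^\sigma$ of the fixed idele class $\frd_F^2\varpi_{\frp^{2n}\frN}$, expand to $\sum_{z}\Kl_c(z;q)$ times an averaged character value, bound the number of contributing $z$ by $(p-1)p^c$ via the translated coset $\xi^{-1}\mu_{p^c}$, and then apply Lemma~\ref{lem:restricted-kloosterman}. Your substitution in Step 2 in fact yields exactly $\Kl_c(z;q)$, with no conjugation or sign change, and using $\calG_0(\phi)\calG_0(\bar\phi)=q$ directly instead of passing through $\calG$ and \eqref{eq:gauss-bridge} is only a cosmetic difference.
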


\begin{proof}
For a primitive character $\phi \in \Xi_{\frp}$ of conductor $\frp^n$, we have the usual Gauss sum identity $\calG(\phi)\calG(\bar\phi)=q\phi_{\frp}(-1)=q$. Combining it with \eqref{eq:corrected-root-factor} gives, for every $\sigma \in G_{\phi}$, 
\begin{equation*}
\calG(\phi^\sigma)W(\phi^\sigma) = \calG(\bar\phi^{\sigma}) \phi^\sigma(\varpi_{\frp^{2n}\frN})
\end{equation*}
and hence by \eqref{eq:gauss-bridge}, 
\begin{equation}\label{eq:root-simplification}
\calG_{0}(\phi^\sigma)W(\phi^\sigma) = \phi^{\sigma}(\frd_{F})^{2} \calG_{0}(\bar\phi^{\sigma}) \phi^\sigma(\varpi_{\frp^{2n}\frN}).
\end{equation}
The expansions of $\calP_\sigma(\phi)$ and $\calG_{0}(\bar\phi^\sigma)$ in \eqref{eq:Giota-definition}, together with \eqref{eq:root-simplification}, give
\begin{align}\label{eq:Giota-expansion}
\calG_{\mathrm{av}}^{\iota}(\phi,\fra)
&=\frac{1}{q}\sum_{a_1,a}
\overline\phi_{\mathrm{av}}(a_1a\bfd_{\fra,n}\frd_{F}^{-2})\rme_q(a_1+a) \notag \\
&=\sum_{z\in(\ZZ/q\ZZ)^\times}
\overline\phi_{\mathrm{av}}(z\bfd_{\fra,n}\frd_{F}^{-2})\Kl_c(z;q).
\end{align}
Here $a_1 \equiv c_\phi \pmod{p^c}$ and $a \in (\ZZ/q\ZZ)^{\times}$, and $\bfd_{\fra,n}$ is defined as the idele class represented by $\fra\varpi_{\frp^{2n}\frN}^{-1}$. The second equality uses the substitution $z=a_1a$.

Fix $\fra$ prime to $\frp$, and abbreviate $\xi=\overline\phi(\bfd_{\fra,n}\frd_{F}^{-2})$. By Lemma~\ref{lem:trace}, the summand indexed by $z$ in \eqref{eq:Giota-expansion} can be nonzero only if $\xi \overline\phi_{\frp}(z)\in\mu_{p^c}$, or equivalently, 
\begin{equation} \label{eq:translated-local-support}
    \overline\phi_{\frp}(z)\in \xi^{-1}\mu_{p^c}.
\end{equation}
Lemma~\ref{lem:faithful} says that $\overline\phi_{\frp}:(\ZZ/q\ZZ)^\times\to\mu_{p^{n-1}}$ is surjective and has kernel of order $p-1$. The intersection of the coset $\xi^{-1}\mu_{p^c}$ with this image has at most $p^c$ elements. Consequently at most $(p-1)p^c$ values of $z\bmod q$ satisfy \eqref{eq:translated-local-support}. For each such $z$ satisfying \eqref{eq:translated-local-support}, Lemma \ref{lem:trace} gives $\bar\phi_{\mathrm{av}}(z\bfd_{\fra,n}\frd_{F}^{-2})=\bar\phi(z\bfd_{\fra,n}\frd_{F}^{-2})$, and hence $\abs{\bar\phi_{\mathrm{av}}(z\bfd_{\fra,n}\frd_{F}^{-2})} = 1$. Lemma~\ref{lem:restricted-kloosterman} gives $\abs{\Kl_c(z;q)}\leq p^{2-n/2}$. Summing the nonzero summands in \eqref{eq:Giota-expansion} proves \eqref{eq:root-average}.
\end{proof}

\subsection{Concluding the Fricke part estimation}
\label{sec:moment}

Let $\phi\in\Xi_{\frp}$ have conductor $\frp^n$, and put $q=p^n$.

\begin{theorem}\label{thm:dual-half}
If $q^2/y\geq1$ and $n > 2c$, then we have
\begin{equation*}\label{eq:dual-half}
 S_2(\phi;y)\ll_{F,f,p,\varepsilon}
 q^{-1/2}\left(\frac{q^2}{y}\right)^{1-s_0}.
\end{equation*}
\end{theorem}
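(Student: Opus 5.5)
We bound $S_2(\phi;y)$ directly from its definition \eqref{eq:S2} by taking absolute values term by term. The prefactor $\omega_\chi(\varpi_{\frp^n})W_f/\Gamma(k/2,f)$ has absolute value $O_f(1)$: $\omega_\chi$ is a unitary character, $W_f=\pm1$, and $\Gamma(k/2,f)$ is a fixed nonzero constant. For the averaged Gauss sum we use Proposition~\ref{prop:root-average}, which gives the bound $\abs{\calG_{\mathrm{av}}^{\iota}(\phi,\fra)}\leq(p-1)p^{c+2}q^{-1/2}$ uniformly in $\fra$, since $n>2c$. Ideals with $\frp\mid\fra$ contribute zero by convention, and keeping them in a positive sum only enlarges the bound. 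For the coefficients we use \eqref{eq:central-coefficient-bound2}, namely $\abs{a_{W_\frN f}(\fra)}(\Norm\fra)^{-k/2}\ll_{F,f,\varepsilon}(\Norm\fra)^{-s_0}$. Combining these gives
\[
\abs{S_2(\phi;y)}\ll_{F,f,p,\varepsilon} q^{-1/2}\sum_{0\neq\fra\subseteq\calO_F}(\Norm\fra)^{-s_0}\abs{V_{2,k/2}\!\left(\frac{\Norm\fra}{Y}\right)},\qquad Y:=\frac{\Norm\frN\,q^2}{y}.
\]

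The remaining sum is estimated with the second assertion of Lemma~\ref{lem:weighted}. Take $\calS$ to be the set of all nonzero integral ideals, $H(\fra)=\Norm\fra$ and $R=1$. The ideal counting function satisfies $\#\{\fra:\Norm\fra\leq X\}\ll_F X$ for $X\geq1$; this follows from the standard asymptotic for the Dedekind zeta function, or from Proposition~\ref{prop:cone-progression} with $M=0$ summed over the $h_F$ ideal classes. By \eqref{eq:kernel-bounds} we have $\abs{V_{2,k/2}(x)}\ll_J(1+x)^{-J}$ for every $J$, and we fix some $J>1-s_0$. Since $0<s_0<1$ by \eqref{eq:range-t}, the lemma applies with $s=s_0$ and $y$ replaced by $Y$, giving a bound $\ll Y^{1-s_0}\ll_{f}(q^2/y)^{1-s_0}$.

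Two small points need care. First, the lemma as proved produces $y^{1-s}$ when $y\geq R$. The hypothesis $q^2/y\geq1$ ensures $Y\geq\Norm\frN\geq1=R$, so the boundary term $Rg(R)$ and the integral are both $\ll Y^{1-s_0}$ and no spurious constant term appears. In the small-$Y$ regime one would otherwise only get $O(1)$, which is still harmless. Second, the constants depend on $c$, and hence on $f$ through $K_0$, via $p^{c}$ in Proposition~\ref{prop:root-average}; this dependence is already allowed in the implied constant. Multiplying the two bounds yields $S_2(\phi;y)\ll q^{-1/2}(q^2/y)^{1-s_0}$.

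The real obstacle, the uniform $q^{-1/2}$ bound on $\calG^{\iota}_{\mathrm{av}}$ without the hypothesis $p\nmid h_F$, is already handled by Proposition~\ref{prop:root-average}. So the only thing to check here is that Lemma~\ref{lem:weighted} applies to the full set of ideals with a linear counting function.
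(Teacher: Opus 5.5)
Your proposal is correct and follows the paper's proof essentially step by step. You bound the prefactor, insert Proposition~\ref{prop:root-average} and \eqref{eq:central-coefficient-bound2} to extract $q^{-1/2}$, and then apply the second part of Lemma~\ref{lem:weighted} to all nonzero integral ideals, with $Y=\Norm\frN\,q^2/y\geq1$ and the linear ideal count. One small remark: your alternative justification of that count via Proposition~\ref{prop:cone-progression} with $M=0$ falls outside the proposition's stated range $M\geq1$ (its proof does go through verbatim), but the standard Dedekind-zeta count, which is what the paper cites, already suffices.
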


\begin{proof}
    The factor $\omega_\chi(\varpi_{\frp^n})W_f$ has absolute value $1$, while $\Gamma(k/2,f)$ is fixed. Hence
\[
\abs{S_2(\phi;y)} \ll_f \sum_{\fra} \frac{\abs{a_{W_{\frN}f}(\fra)}}{(\Norm\fra)^{k/2}} \abs{\calG_{\mathrm{av}}^\iota(\phi,\fra)} \abs{ V_{2,k/2}\!\left( \frac{\Norm\fra\,y}{\Norm\frN\,q^2} \right)}.
\]
By \eqref{eq:central-coefficient-bound2} and \eqref{eq:root-average}, with the fixed factor $(p-1)p^{c+2}$ absorbed into the implied constant, we have
\[
\begin{aligned}
\abs{S_2(\phi;y)}
&\ll_{F,f,p,\varepsilon}q^{-1/2}\sum_{\fra}(\Norm\fra)^{-s_0}
\abs{V_{2,k/2}\!\left(\frac{\Norm\fra\,y}{\Norm\frN\,q^2}\right)}\\
&=q^{-1/2}\sum_{\fra}(\Norm\fra)^{-s_0}
\abs{V_{2,k/2}\!\left(\frac{\Norm\fra}{Y}\right)}.
\end{aligned}
\]
Here $Y := \Norm\frN q^{2}/y$. The hypothesis that $q^{2}/y \geq 1$ ensures that $Y \geq 1$ since $\Norm\frN \geq 1$.

Recall the standard ideal-counting estimate (see \cite[Chapter VI, Section 3]{LangANT}):
\begin{equation}\label{eq:all-ideals}
 \#\{0\ne\fra\subseteq\calO_F:\Norm\fra\leq X\}\ll_F X
\end{equation}
Now we use the second part of Lemma~\ref{lem:weighted} with
\[
\calS = \{ 0 \neq \fra \subseteq \calO_{F} \}, \quad H(\fra) = \Norm\fra,
\]
for which the counting function is exactly $A_{\calS}(X) \ll_{F} X$. We take $s = s_0, V = V_{2, k/2}$ and $y = Y$. Since $0 < s_0 < 1$ and $V_{2, k/2}$ satisfies the required decay property \eqref{eq:kernel-bounds}, the estimate \eqref{eq:all-ideals}, together with an application of Lemma \ref{lem:weighted} to these parameters, gives
\[
\sum_{\fra} (\Norm\fra)^{-s_0} \abs{ V_{2,k/2}\!\left(\dfrac{\Norm\fra}{Y}\right) } \ll_{F,f,\varepsilon} Y^{1-s_0}.
\]
Multiplying by $q^{-1/2}$ then proves the theorem.
\end{proof}
\section{Main theorems}
\label{sec:comparison}

\subsection{The main estimate and the eventual nonvanishing theorem}

Combining Theorems~\ref{thm:first-half} and~\ref{thm:dual-half} gives the following main estimate of this article.
\begin{theorem}[{The main estimate (Theorem \ref{main:B})}] \label{thm:average}
Let $f$ be a normalized cuspidal Hecke eigenform on $\GL(2)$ over $F$ of weight $(\bfk, \bfm)$, archimedean type $J$, level $\frN$ and central character $\chi$ in the sense of Definition \ref{def:cusp-form}. Assume that $p$ is an odd prime number and that $\frp \mid p$ is a totally split prime ideal of $F$, with $\frp$ coprime to $\frN$. 

Let $\phi\in\Xi_{\frp}$ have conductor $\frp^n$, where $n>2c$ with the constant $c$ defined in \eqref{eq:cyclotomic-intersection-intro}. For $\varepsilon>0$ with $t=\frac{7}{64}+\varepsilon<1/4$, put $s_0=1/2-t$ and let $q=p^n$. For $q<y<q^2$, we have the following estimate:
\begin{equation}\label{eq:main-average}
 L_{\mathrm{av}}(f,\phi)
 =\frac{V_{1,k/2}(1/y)}{\Gamma(k/2,f)}
 +O_{F,f,p,\varepsilon}\!\left(q^{-\frac{1}{e(p-1)}s_0}+q^{-1}y^{1-s_0}
 +q^{-1/2}\left(\frac{q^2}{y}\right)^{1-s_0}\right).
\end{equation}
In particular, as $n \rightarrow \infty$, $L_{\mathrm{av}}(f,\phi)$ converges to $1$ uniformly for $\phi\in\Xi_{\frp}$. More precisely, for $n \to \infty$, we have 
\[
\sup_{\phi\in\Xi_{\frp}, \, \cond(\phi)=\frp^n}
\abs{L_{\mathrm{av}}(f,\phi)-1}
\longrightarrow 0
\]
\end{theorem}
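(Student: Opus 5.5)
The plan is to combine the decomposition \eqref{eq:averaged-afe} with the two half-estimates already established. Fix $\phi\in\Xi_{\frp}$ of conductor $\frp^n$ with $n>2c$, and fix $q<y<q^2$. Then $y\geq1$, so Theorem~\ref{thm:first-half} applies. Also $q^2/y>1$, so Theorem~\ref{thm:dual-half} applies.

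Writing $L_{\mathrm{av}}(f,\phi)=S_1(\phi;y)+S_2(\phi;y)$, I add the two results. This gives the main term $V_{1,k/2}(1/y)/\Gamma(k/2,f)$ together with the three error terms
\[
q^{-\frac{1}{e(p-1)}s_0},\qquad q^{-1}y^{1-s_0},\qquad q^{-1/2}\left(\frac{q^2}{y}\right)^{1-s_0},
\]
which is exactly \eqref{eq:main-average}. The dependence on $K_0$ in Theorem~\ref{thm:first-half} is absorbed into the dependence on $f$ and $p$, since $K_0=\QQ_f(\mu_p)$. None of the implied constants depends on $\phi$ or $n$, so the estimate is uniform in $\phi$ of conductor $\frp^n$.

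For the convergence statement, I choose $y$ as a function of $q$ so that all error terms tend to $0$ and $V_{1,k/2}(1/y)\to\Gamma(k/2,f)$. The natural choice is $y=q^{3/2}$, which lies in $(q,q^2)$ for $n\geq1$.
\begin{itemize}
\item The second error term becomes $q^{-1+\frac32(1-s_0)}=q^{\frac12-\frac32 s_0}$. Since $s_0>1/4$ by \eqref{eq:range-t}, this exponent is negative only if $s_0>1/3$, which is not automatic.
\item So $y=q^{3/2}$ may fail, and I instead take $y=q^{1+\delta}$ with $\delta>0$ small. The second term is then $q^{-1+(1+\delta)(1-s_0)}=q^{\delta-s_0(1+\delta)}$. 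This tends to $0$ whenever $\delta<s_0/(1-s_0)$.
\item The third term is $q^{-1/2+(1-\delta)(1-s_0)}$. Its exponent is $\frac12-s_0-\delta(1-s_0)=t-\delta(1-s_0)$, which is negative iff $\delta>t/(1-s_0)$.
\item Both conditions can be met together because $t<s_0$, and this holds since $t<1/4<s_0$. So I take $\delta=(t+s_0)/(2(1-s_0))$, which is less than $1$ because $t+s_0=1/2<2(1-s_0)$.
\end{itemize}
The first error term tends to $0$ in any case, since its exponent is fixed and negative.

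Finally, \eqref{eq:kernel-bounds} gives $V_{1,k/2}(1/y)=\Gamma(k/2,f)+O(y^{-1/2})$, and $y^{-1/2}\to 0$. All bounds are uniform in $\phi$ of conductor $\frp^n$, so $\sup\abs{L_{\mathrm{av}}(f,\phi)-1}\to0$ as $n\to\infty$.

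The only step needing care is this balancing of exponents. It works precisely because $\theta<1/4$ forces $t<s_0$; the earlier sections did all the substantive work.
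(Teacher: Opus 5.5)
Your proposal is correct and follows the paper's proof: you add Theorems~\ref{thm:first-half} and~\ref{thm:dual-half} through the decomposition $L_{\mathrm{av}}=S_1+S_2$, then pick $y=q^{1+\delta}$ with $t/(1-s_0)<\delta<s_0/(1-s_0)$, and finish with $V_{1,k/2}(1/y)=\Gamma(k/2,f)+O(y^{-1/2})$. Your window for $\delta$ is exactly the paper's window $a\in\bigl(\frac{1/2+2t}{1/2+t},\frac{1}{1/2+t}\bigr)$ for $y=q^a$, since $1-s_0=\frac12+t$.
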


\begin{proof}
Combining Theorems~\ref{thm:first-half}
and~\ref{thm:dual-half} gives the estimate
\eqref{eq:main-average}; the implied constant is independent of $y$, $n$, and the particular character $\phi$.

It remains to choose the auxiliary parameter $y$ in order to deduce the asserted convergence. Fix a real number $a$ and put $y=q^a$. Since $1-s_0=\frac{1}{2}+t$, the exponents of $q$ in the two $y$-dependent error terms are $a\left(\frac{1}{2}+t\right)-1$ and $-\frac{1}{2}+(2-a)\left(\frac{1}{2}+t\right)$. One checks that both of them are negative provided
\[
a \in \left( \frac{1/2+2t}{1/2+t}, \frac{1}{1/2+t} \right).
\]
This interval is nonempty because $t<1/4$ in our convention \eqref{eq:range-t}. Moreover, since $t>0$, the interval is contained in $(1,2)$, so the corresponding choice $y=q^a$ satisfies $q<y<q^2$.

For any fixed $a$ in this interval, all three error terms in \eqref{eq:main-average} tend to zero as $q\to\infty$. Furthermore, \eqref{eq:kernel-bounds} gives (by taking $x$ there as $1/y$, and further $y = q^{a}$):
\[
\frac{V_{1,k/2}(1/y)}{\Gamma(k/2,f)} = 1+O(y^{-1/2}) = 1+O(q^{-a/2}).
\]
All implied constants are independent of $\phi$. The convergence result then follows.
\end{proof}

\begin{theorem}[{Eventual nonvanishing theorem (Theorem \ref{main:A})}]\label{thm:nonvanishing}
Under the hypotheses of Theorem \ref{thm:average},
\[
 L\!\left(\frac{k}{2},f\otimes\phi\right)\ne0
\]
for every $\phi\in\Xi_{\frp}$ of sufficiently large conductor.
\end{theorem}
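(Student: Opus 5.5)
The plan is to argue by contradiction, combining Theorem \ref{thm:average} with Hida's algebraicity theorem \cite[Theorem~8.1]{Hida}. Suppose there are infinitely many $\phi\in\Xi_{\frp}$ with $L(k/2,f\otimes\phi)=0$. Since for each $n$ there are only finitely many characters of conductor $\frp^n$ (they factor through the finite group $\Cl(F,\frp^n)$), their conductors $\frp^{n}$ must be unbounded. So we may pick such a $\phi$ with $n>2c$ and $n$ as large as we like.

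Next we show that all Galois conjugates $\phi^\sigma$, $\sigma\in G_\phi$, also give vanishing $L$-values. Extend $\sigma\in G_\phi=\Gal(\QQ(\phi)/\QQ(\mu_{p^c}))$ to $\widetilde\sigma\in\Aut(\CC/\QQ_f)$. This is possible because $\QQ(\phi)=\QQ(\mu_{p^r})$ by the proof of Lemma \ref{lem:trace}, and \eqref{eq:cyclotomic-intersection-intro} gives $\QQ_f(\mu_p)\cap\QQ(\mu_{p^r})=\QQ(\mu_{p^c})$. Hence $\QQ(\mu_{p^r})$ and $\QQ_f(\mu_p)$ are linearly disjoint over $\QQ(\mu_{p^c})$, and $\sigma$ lifts to an automorphism of the compositum fixing $\QQ_f(\mu_p)$. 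We then extend that to $\CC$.

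Since $\widetilde\sigma$ fixes $\QQ_f$, we have $f^{\widetilde\sigma}=f$; the Hecke eigenvalues, and hence the form, are determined by values in $\QQ_f$. Hida's theorem therefore gives $L(k/2,f\otimes\phi^\sigma)=0$ for every $\sigma\in G_\phi$. Each $\phi^\sigma$ is again in $\Xi_{\frp}$ with conductor $\frp^n$.

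Substituting into Definition \ref{def:averaged-L-function}, every summand of $L_{\mathrm{av}}(f,\phi)$ contains a factor $L(k/2,f\otimes\phi^\sigma)=0$, so $L_{\mathrm{av}}(f,\phi)=0$. Then $\abs{L_{\mathrm{av}}(f,\phi)-1}=1$ for characters of arbitrarily large conductor. This contradicts the uniform convergence $\sup_{\cond(\phi)=\frp^n}\abs{L_{\mathrm{av}}(f,\phi)-1}\to0$ in Theorem \ref{thm:average}. Equivalently, fix $n_0>2c$ such that this supremum is below $1$ for all $n\ge n_0$; then every $\phi$ of conductor $\frp^n$ with $n\ge n_0$ has $L(k/2,f\otimes\phi)\neq0$.

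The main obstacle is the Galois step: justifying that $\sigma\in G_\phi$ extends to an automorphism fixing the Hecke field, and that Hida's theorem applies in the form $L(k/2,f\otimes\phi)=0\iff L(k/2,f^{\widetilde\sigma}\otimes\phi^{\widetilde\sigma})=0$ with $f^{\widetilde\sigma}=f$. The choice of $c$ in \eqref{eq:cyclotomic-intersection-intro} is made exactly so that this linear disjointness holds. If $\QQ_f$ were not the field of rationality of $f$ in Hida's sense, we would enlarge $\QQ_f$ to that field. This only changes $c$, which is harmless since $n\to\infty$.
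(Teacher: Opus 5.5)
Your proposal is correct and follows the paper's proof essentially step for step. You use Hida's algebraicity theorem together with $\QQ_f(\mu_p)\cap\QQ(\phi)=\QQ(\mu_{p^c})$ to extend each $\sigma\in G_\phi$ to an automorphism of $\CC$ fixing $\QQ_f(\mu_p)$, so vanishing of $L(k/2,f\otimes\phi)$ forces $L_{\mathrm{av}}(f,\phi)=0$, which contradicts the uniform convergence in Theorem \ref{thm:average}; your remarks on finiteness of characters of each conductor and on linear disjointness only spell out details the paper leaves implicit.
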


\begin{proof}
It follows from Hida's algebraicity theorem \cite[Theorem~8.1]{Hida} that 
\begin{equation} \label{eq:hida-original}
 L(k/2,f\otimes\phi)=0 \quad\Longleftrightarrow\quad L(k/2,f^\sigma\otimes\phi^\sigma)=0
\end{equation}
for any $\sigma \in \Aut_{\QQ}(\CC)$. Put $K_0=\QQ_f(\mu_p)$ and $K=\QQ(\phi)$. By Lemma \ref{lem:faithful}, the image of the local restriction of $\phi$ contains $\mu_{p^{n-1}}$. Since $n>2c$, we have $\QQ(\mu_{p^c})\subseteq K$, and hence
\[
K_0\cap K =\bigl(K_0\cap\QQ(\mu_{p^\infty})\bigr)\cap K =\QQ(\mu_{p^c}).
\]
Every $\sigma\in G_\phi=\Gal(K/(K_0\cap K))$ therefore extends to an automorphism $\widetilde\sigma$ of $\CC$ fixing $K_0$. In particular, $f^{\widetilde\sigma}=f$. Then \eqref{eq:hida-original} now gives, for $\sigma \in G_{\phi}$,
\[
L(k/2,f\otimes\phi)=0 \quad\Longleftrightarrow\quad L(k/2,f\otimes\phi^\sigma)=0.
\]

If $L(k/2, f \otimes \phi) = 0$ for infinitely many Hecke characters $\phi \in \Xi_{\frp}$, then it follows that $L_{\mathrm{av}}(f,\phi)=0$ for infinitely many such $\phi$, contradicting the conclusion of Theorem \ref{thm:average} that $L_{\mathrm{av}}(f, \phi) \neq 0$ for sufficiently large $n$.
\end{proof}

\subsection{Application to $\frp$-ramified $\ZZ_p$-extensions of imaginary quadratic fields}
\label{sec:qi-stokes}

In this section, we apply the eventual nonvanishing result (Theorem \ref{thm:nonvanishing}) to the $\frp$-ramified $\ZZ_p$-extensions of imaginary quadratic fields.

\subsubsection{The $\frp$-ramified $\ZZ_p$-extension}

Let $K$ be an imaginary quadratic field, and suppose that the odd prime $p$ splits as $p\calO_K=\frp\bar\frp$. Taking $S=\{\frp\}$, the theorem of Qi and Stokes \cite[Theorems~1.1 and~2.1]{QiStokes} gives a unique $S$-ramified $\ZZ_p$-extension $K_{\infty}^{(\frp)}$ of $K$. \footnote{Indeed, their construction applies to CM fields under Leopoldt's conjecture, which is automatic for the imaginary quadratic field $K$}.

For later use, we describe this $\ZZ_p$-extension more explicitly. Let $M_{\{\frp\}}$ be the maximal abelian pro-$p$ extension of $K$ unramified outside $\frp$, and put $H_{\frp}=\Gal(M_{\{\frp\}}/K)$. Class field theory puts $H_{\frp}$ into the following short exact sequence \footnote{We add more details here. For $m\geq1$, recall that we denote by $\Cl(K, \frp^m)$ the ray class group of $K$ with modulus $\frp^m$. Global class field theory gives $H_{\frp}\cong \varprojlim_m\Cl(K, \frp^m)[p^\infty]$. The $p$-primary part of the ray class sequence is
\[
1\rightarrow (1+p\ZZ_p)/(1+p^m\ZZ_p) \rightarrow \Cl(K, \frp^m)[p^\infty] \rightarrow \Cl(K)[p^\infty] \rightarrow 1.
\]
Here we have used $K_{\frp}\cong\QQ_p$ and the fact that $\calO_K^\times$ has order prime to $p$.
Passing to the inverse limit over $m$ gives the exact sequence \eqref{eq:pro-p-imaginary}.}:
\begin{equation}\label{eq:pro-p-imaginary}
 1\rightarrow 1+p\ZZ_p \rightarrow H_{\frp}\rightarrow \Cl(K)[p^\infty] \rightarrow 1.
\end{equation}
In particular, $H_{\frp}$ is a finitely generated abelian pro-$p$ group of $\ZZ_p$-rank one. If $T_{\frp}$ denotes the finite torsion subgroup of $H_{\frp}$, then $\Gamma_{\frp}:=H_{\frp}/T_{\frp} \cong \ZZ_p$. The fixed field of $T_{\frp}$ is therefore a $\ZZ_p$-extension of $K$ unramified outside $\frp$. By \cite[Theorems 1.1 and 2.1]{QiStokes}, it is their $\frp$-ramified $\ZZ_p$-extension $K_{\infty}^{(\frp)}/K$.

\subsubsection{Characters over the $\frp$-ramified $\ZZ_p$-extension and their conductor} \label{sec:conductor-frp-Zp}
Consider the composite homomorphism $\iota: 1+p\ZZ_p\rightarrow H_{\frp}\rightarrow\Gamma_{\frp}$. Its kernel is the intersection of the image of $1+p\ZZ_p$ in $H_{\frp}$ with $T_{\frp}$. Since $(1+p\ZZ_p) \cap T_{\frp} = \{1\}$, this implies that the composite $\iota$ is injective. Its image is open because
\[
\Gamma_{\frp}/\iota(1+p\ZZ_p) \cong H_{\frp}/\bigl(T_{\frp}(1+p\ZZ_p)\bigr)
\]
is a quotient of the finite group
$H_{\frp}/(1+p\ZZ_p)\cong\Cl(K)[p^\infty]$ by \eqref{eq:pro-p-imaginary}. The $p$-adic logarithm gives an isomorphism $\lambda:1+p\ZZ_p\xrightarrow{\sim}\ZZ_p$ given by $\lambda(x)=\log_p(x)/p$. It satisfies $\lambda(1+p^m\ZZ_p)=p^{m-1}\ZZ_p$. Choose also an isomorphism $\jmath:\Gamma_{\frp}\xrightarrow{\sim}\ZZ_p$. Then the continuous open injection $\jmath\circ\iota\circ\lambda^{-1}:\ZZ_p\rightarrow\ZZ_p$ has the form
\[
(\jmath\circ\iota\circ\lambda^{-1})(x)=p^bu x
\]
for some $b\geq0$ and $u\in\ZZ_p^\times$.

Now let $\phi$ be a continuous character of $\Gamma_{\frp}$ of exact order $p^r$.
Under the identification $\jmath$, the kernel of $\phi$ is $p^r\ZZ_p$.
If $r>b$, then the kernel of its pullback through $\jmath\circ\iota\circ\lambda^{-1}$ is
\[
\{x\in\ZZ_p:p^bu x\in p^r\ZZ_p\} =p^{r-b}\ZZ_p.
\]
Therefore, the character $\phi$ is trivial on $1+p^{m}\ZZ_p$ precisely when $m-1 \geq r-b$, i.e. it is nontrivial on $1+p^{r-b}\ZZ_p$ and trivial on $1+p^{r-b+1}\ZZ_p$. Hence its exact local conductor exponent is $r-b+1$ when $r > b$. Globally, it is unramified outside $\frp$; therefore, the conductor of $\phi$ is $\frp^{r-b+1}$.

In particular, the characters of $\Gamma_{\frp}$ of exact order $p^r$, for $r>b$, give an infinite family of $p$-power-order Hecke characters in $\Xi_{\frp}$, and their conductors tend to infinity when $r$ goes to infinity.

\begin{remark}
By construction, the prime ideal $\bar\frp$ is unramified in the $\ZZ_p$-extension $K_{\infty}^{(\frp)}/K$. It is therefore different from the cyclotomic and anticyclotomic $\ZZ_p$-extensions.
\end{remark}

\subsubsection{Base change of elliptic curves} \label{sec:base-change}
Let $E/\QQ$ be an elliptic curve of conductor $N$. By the modularity theorem \cite{BCDT}, there is a normalized newform $f_E\in S_2(\Gamma_0(N))$ such that $L(E/\QQ,s)=L(f_E,s)$. Let $\pi_E=\pi_{f_E}$ be the associated cuspidal automorphic representation of $\GL_2(\AA_\QQ)$. 

The work of Langlands \cite{Langlands} gives the quadratic base change $\Pi := \BC_K(\pi_E)$ of $\pi_{E}$ as an automorphic representation of $\GL_2(\AA_K)$, and the representation $\Pi$ is cuspidal unless $E$ has CM by $K$. Let $f_{E,K}$ denote the normalized newform generating $\Pi$. If $p\nmid N$ and $\frp\mid p$ is split in $K$, then $K_{\frp}\simeq\QQ_p$ and $\Pi_{\frp}$ is unramified. 

The $L$-function of $f_{E,K}$ has central point $s=1$. For every finite-order Hecke character $\phi$ of $K$ one has
\begin{equation} \label{eq:L-function-identity}
 L(E/K,\phi,s)=L(s,f_{E,K}\otimes\phi).
\end{equation}

\subsubsection{The main result}

The following theorem is then a corollary of Theorem \ref{thm:nonvanishing}. 

\begin{theorem} \label{thm:elliptic}
Let $E/\QQ$ be an elliptic curve of conductor $N$. Suppose $p \nmid N$, $p$ splits in $K$ and $E$ does not have CM by $K$. For every finite-order character $\phi$ of $\Gamma_{\frp}$ of sufficiently large conductor, $L(E/K,\phi,1)\ne0$.
\end{theorem}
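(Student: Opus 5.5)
The plan is to apply Theorem \ref{thm:nonvanishing} over $F=K$ with $f=f_{E,K}$, and then translate via \eqref{eq:L-function-identity}.

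First, the hypotheses. Since $E$ has no CM by $K$, the base change $\Pi=\BC_K(\pi_E)$ of \S\ref{sec:base-change} is cuspidal. Its normalized newform $f_{E,K}$ is therefore a normalized cuspidal Hecke eigenform on $\GL(2)$ over $K$ in the sense of Definition \ref{def:cusp-form}. Its weight is parallel $2$ with $\bfm=0$, so $k=2$ and the central point is $k/2=1$. Its level $\frN$ is supported on primes above $N$, and possibly $\disc(K)$ after twisting conventions; in any case it is supported on primes at which $\Pi$ ramifies. Because $p\nmid N$ and $p$ splits in $K$, we have $K_{\frp}\simeq\QQ_p$ and $\Pi_{\frp}$ is unramified, so $\frp\nmid\frN$. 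The split condition gives $e(\frp\mid p)=f(\frp\mid p)=1$, so $\frp$ is totally split in the sense of Theorem \ref{main:A}. The only step needing care here is checking that $\frN$ is prime to $\frp$. This follows from the unramifiedness of $\Pi_{\frp}$, since the conductor of a base change is supported on primes where the local component ramifies.

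Second, the characters. By \S\ref{sec:conductor-frp-Zp}, every finite-order character $\phi$ of $\Gamma_{\frp}$, viewed as a Hecke character of $K$ through $H_{\frp}\twoheadrightarrow\Gamma_{\frp}$ and class field theory, has $p$-power order and is unramified outside $\frp$. Hence $\phi\in\Xi_{\frp}$. If $\phi$ has exact order $p^r$ with $r>b$, its conductor is $\frp^{r-b+1}$. Only finitely many characters of $\Gamma_{\frp}$ have order at most $p^b$, and these have bounded conductor. So "sufficiently large conductor" among characters of $\Gamma_{\frp}$ means exactly large $r$, which is the same as large $\frp$-power conductor within $\Xi_{\frp}$.

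Finally, Theorem \ref{thm:nonvanishing} gives $L(1,f_{E,K}\otimes\phi)\neq0$ for all $\phi\in\Xi_{\frp}$ of sufficiently large conductor. By \eqref{eq:L-function-identity} this equals $L(E/K,\phi,1)$, which proves the claim.

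The main obstacle I anticipate is the normalization. One must check that the arithmetic $L$-function \eqref{eq:arithmetic-L-series} of $f_{E,K}$ twisted by $\phi$ agrees with $L(E/K,\phi,s)$ at $s=1$, with no shift, and that the unitary versus arithmetic normalization of $\phi$ matches, since $\phi$ has finite order and is trivial at infinity. With $k=2$ and $\bfm=0$, the central point $k/2=1$ is indeed the point $s=1$ for $E$, so I expect this to go through directly.
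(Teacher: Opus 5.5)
Your proposal is correct and follows the same route as the paper's proof. The paper likewise uses Section~\ref{sec:conductor-frp-Zp} to see that characters of $\Gamma_{\frp}$ lie in $\Xi_{\frp}$ with unbounded conductors, uses Section~\ref{sec:base-change} to get that $\Pi=\BC_K(\pi_E)$ is cuspidal with $\Pi_{\frp}$ unramified, and then applies Theorem~\ref{thm:nonvanishing} via \eqref{eq:L-function-identity}; your extra checks (weight $2$ so $k/2=1$, $\frp\nmid\frN$, and the finitely many characters of order at most $p^b$) just make explicit what the paper leaves implicit.
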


\begin{proof}
From Section \ref{sec:conductor-frp-Zp}, the characters of $\Gamma_{\frp}$ are $p$-power characters of $\frp$-power conductor and their conductors are unbounded. We have seen in Section \ref{sec:base-change} that, under the theorem's assumptions, $p$ is unramified in $K$, $\Pi$ is cuspidal and $\Pi_{\frp}$ is unramified. With equation \eqref{eq:L-function-identity}, Theorem~\ref{thm:nonvanishing} applies directly.
\end{proof}

In a forthcoming work with Peikai Qi, we establish the Iwasawa theory of elliptic curves $E/\QQ$ with good ordinary reduction at $p$ along the $\frp$-ramified $\ZZ_p$-extension $K_{\infty}^{(\frp)}/K$ satisfying additional hypotheses covered by Theorem \ref{thm:elliptic}. The eventual nonvanishing property of $L(E/K, \phi, 1)$ in Theorem \ref{thm:elliptic} serves as a crucial analytic input in such an Iwasawa theory. As an application, we shall prove that under certain technical assumptions covered by Theorem \ref{thm:elliptic}, the Mordell--Weil group $E(K_{\infty}^{(\frp)})$ is a finitely generated abelian group.

\printbibliography

\end{document}